\newcommand{\tens}{\otimes}
\newcommand{\bC}{{\mathbb{C}}}
\newcommand{\bN}{{\mathbb{N}}}
\newcommand{\bQ}{{\mathbb{Q}}}
\newcommand{\bR}{{\mathbb{R}}}
\newcommand{\bZ}{{\mathbb{Z}}}
  \newcommand{\C}{{\mathcal{C}}}
  \newcommand{\M}{{\mathcal{M}}}
  \newcommand{\R}{{\mathcal{R}}}
\renewcommand{\phi}{\varphi}
\newcommand{\upchi}{{\raise.35ex\hbox{\ensuremath{\chi}}}}
\newcommand{\eps}{\varepsilon}
\renewcommand{\leq}{\leqslant}
\renewcommand{\geq}{\geqslant}
\renewcommand{\le}{\leqslant}
\renewcommand{\ge}{\geqslant}
\newtheorem{thm}{Theorem}[section]
\newtheorem{defi}[thm]{Definition}
\newtheorem{prop}[thm]{Proposition}
\newtheorem{cor}[thm]{Corollary}
\newtheorem{lemma}[thm]{Lemma}
\newtheorem{rk}[thm]{Remark}
\begin{document}

\title{Fractional powers on noncommutative $L_p$ for $p<1$}
\date{}
\author[\'E. Ricard]{\'Eric Ricard}
\address{Normandie Univ, UNICAEN, CNRS, Laboratoire de Math{\'e}matiques Nicolas Oresme, 14000 Caen, France}
\email{eric.ricard@unicaen.fr}

\thanks{{\it 2010 Mathematics Subject Classification:} 46L51; 47B10.} 
\thanks{{\it Key words:} Noncommutative $L_p$-spaces, functional calculus}

\begin{abstract}
We prove that the homogeneous functional calculus associated to
$x\mapsto |x|^\theta$ or $x\mapsto {\rm sgn}\, (x) |x|^{\theta}$ for
$0<\theta<1$ is $\theta$-H\"older on selfadjoint elements of
noncommutative $L_p$-spaces for $0<p\leq\infty$ with values in
$L_{p/\theta}$. This extends an inequality of Birman, Koplienko and
Solomjak also obtained by Ando.
\end{abstract}
\maketitle
\section{Introduction}

 This note deals with the perturbation theory of functional calculus of
 selfadjoint operators on Hilbert spaces.  More precisely, given a
 function $f:\bR\to \bR$, the problem is to get a control of
 $\|f(x)-f(y)\|_{\mathfrak S}$ for some symmetric norm on selfadjoint
 operators in terms of possibly another norm $\|x-y\|_{{\mathfrak
   S}'}$.  This topic was developed from the 50's by the Russian
 school. Birman and Solomjak had a strong impact on it by the
 introduction of operator integrals in the 60's. Since then, this
 subject has been very active. Many mathematicians tried to enlarge
 the classes of functions $f$ or norms involved. The list would be too
 long, but we can quote Arazy \cite{A,AF}, Ando \cite{Ando}, and more
 recently the breakthroughs by Alexandrov-Peller \cite{AP2,AP},... and
 Potapov-Sukochev \cite{PS, PS2} and their coauthors \cite{PST},...
 Usually the results are stated for symmetric (quasi-)norms on compact
 operators, for instance the Schatten $p$-classes $S^p$ for $0<p\leq
 \infty$. Nevertheless the noncommutative integration theory in von
 Neumann algebras also gives a natural framework to study these
 questions.

 Our starting point is an inequality in \cite{BKS}, for any fully symmetric
 norm $\|.\|_{\mathfrak S}$ and any $0<\theta<1$, and $x,y$ positive
 operators on some Hilbert space, i.e $x,y\in B(H)^+$
$$\big\|x^\theta -y^\theta\big\|_{\mathfrak S}\leq \big\| \,
 |x-y|^\theta\,\big\|_{\mathfrak S}.$$

It was extended by Ando \cite{Ando} to any operator monotone function
$f:\bR^+\to \bR^+$ instead of $x\mapsto x^\theta$. Dodds and Dodds
\cite{DD} adapted the proof to semi-finite von Neumann algebras for all fully
symmetric norms.

In the case of Schatten classes, Birman Koplienko and Solomjak's result or Ando's proof actually give that for $p\geq \theta$ and $x,y\in B(H)^+$
\begin{equation*}\label{and}\big\|x^\theta -y^\theta\big\|_{p/\theta}\leq \big\| \,  x-y\,\big\|_{p}^\theta.\end{equation*}
This also holds for semi-finite Neumann algebras by \cite{DD} or \cite{PS2}. For
general von Neumann algebras, Kosaki got the case $p=\theta$ in \cite{K2}
with an extra factor, a full argument can be found in \cite{CPPR} or
\cite{R}.

Another remarkable extension was obtained in \cite{AP} for Schatten
$p$-classes when $1<p<\infty$; it is shown that for any
$\theta$-H\"older function $f$ on $\bR$, with $0<\theta<1$ and any
selfadjoint $x,y\in B(H)^{sa}$, one has 
\begin{equation}\label{and2}\| f(x)-f(x)\|_{p/\theta}\leq
C_{p,f} \|x-y\|_p^\theta.\end{equation}
 In particular this holds if
$f(x)=|x|^\theta$ or $f(x)={\rm sgn}(x)|x|^\theta$. For them, the arguments
can be adapted to general von Neumann algebras \cite{R} and one can also 
reach $p=1$ in \eqref{and2}.

 Surprisingly, when $p<1$ even for Schatten classes, very little is
 known. One can find some asymptotic estimates in \cite{BKS} or
 \cite{Rot2} but \eqref{and2} seems to be unknown. Weaker related
 inequalities were also recently obtained in \cite{Sob}.

Among  general results
Raynaud \cite{Ray} proved that $x\mapsto f(x)$ from $L_p$ to $L_{p/\theta}$ 
is uniformly continuous on balls for $f$ as above.
In \cite{PR}, for type II von Neumann algebras a strange quantitative estimate
was obtained for its modulus of continuity. 

Our main result is that \eqref{and2} holds for all $0<p\leq \infty$ for
both $f(x)=|x|^\theta$ and $f(x)={\rm sgn}(x)|x|^\theta$ and all von
Neumann algebras. We
hope that the techniques developed here may be useful for related
topics.

We do not address similar questions when $\theta>1$. When $p\geq 1$,
this is done in \cite{R} and when $p<1$, some local results can be
found in \cite{A,PST} for Schatten classes.

As usual to deal with such questions, one has to find norm estimates
for some Schur multipliers, this is done in the second section. Next,
they are used to derive the main result for semi-finite von Neumann
algebras. The argument heavily rests on homogeneity of $f$. To
generalize to type III algebras, one usually relies on the Haagerup
reduction principle, but it involves approximations using conditional
expectations that are not bounded when $p<1$ and it seems difficult to
use it in our situation.  The only available tool we have is to use
weak-type inequalities in semi-finite algebras to go to type III. The
situation is so particular here that this can be done quite easily in
section 4. We end up with some general remarks and extensions.

In the whole paper, we freely use noncommutative $L_p$-spaces. One may
use \cite{PX, P, terp} and \cite{FK} as general references. When $\tau$ 
is a normal faithful
trace on a von Neumann algebra $\M$, we use the classical definition of
noncommutative $L_p$ associated to $\M$
$$L_p(\M,\tau)=\{ x \in L_0(\M,\tau) \;|\;
\|x\|_p^p=\tau(|x|^p)<\infty\},$$ where $L_0(\M,\tau)$ is the space
$\tau$-measurable operators (see \cite{terp}).  When dealing with more
general von Neumann algebras, we rely on Haagerup's
construction. Given a normal faithful semi-finite weight $\phi_0$ on a
von Neumann algebra $\M$, Haagerup defined the noncommutative
$L_p$-space $L_p(\M,\phi_0)$ for $0< p\leq \infty$ (see
\cite{terp}). His definition is independent of $\phi_0$ (Corollary 38
in \cite{terp}). When $\phi_0$ is a normal faithful trace, his
definition is equivalent to the previous one (up to a complete
isometry) but the identifications are not obvious. Nevertheless for
most of our statements, we won't need the reference to $\phi_0$ or
$\tau$ so we may simply write $L_p(\M)$. When $0<p<1$, $L_p(\M)$ is a
$p$-normed space so that for all families $(a_k)$ in $L_p(\M)$, $\|\sum_{k=1}^n
a_k\|_p^p\leq  \sum_{k=1}^n\|a_k\|_p^p$.

We will use the notation
$S^p_{I,J}$ for the Schatten $p$-class on $B(\ell_2(J),\ell_2(I))$,
this is naturally a subspace of $L_p(B(\ell_2(I\cup J)),{\rm tr})$,
where ${\rm tr}$ is the usual trace. Thus by $S^p_{I,J}[L_p(\M)]$, we
will mean the corresponding subspace of $L_p(B(\ell_2(I\cup J))\otimes
\M,{\rm tr}\otimes \phi_0)$, one can think of it as matrices indexed
by $I\times J$ with coefficients in $L_p(\M)$. We will often use non
countable sets like $I=]0,1]$.

As usual, we denote constants in inequalities by $C_{p_i}$ if they
depend only on parameters $(p_i)$. They may differ from line to line.

\section{Schur multipliers}

A Schur multiplier with symbol $M=(m_{i,j})_{i\in I, j\in J}$ over
$M_{I,J}$, the set of matrices indexed by sets $I$ and $J$, is formally given by
$$S_M((a_{i,j})_{i\in I, j\in J})=M\circ A=(m_{i,j}a_{i,j})_{i\in I, j\in J}.$$
\begin{defi}
Given a matrix $M=(m_{i,j})_{i\in I, j\in J}$ of complex numbers, we
say that $M$ defines a $p$-completely bounded Schur multiplier for
some $0<p\leq \infty$ if the map $S_M\tens Id_{L_p(\M)}$ on
$S_{I,J}^p[L_p(\M)]$ is bounded for all von Neumann algebra $\M$ and
we put $\|M\|_{pcb}= \sup_{\M} \|S_M\tens Id_{L_p(\M)}\|$.
\end{defi}

\begin{rk}{\rm
For $1<p\neq 2<\infty$, this is not exactly the usual definition of
complete boundedness but it is formally stronger. Indeed an
unpublished result of Junge states that $\|S_M\tens
Id_{L_p(\M)}\|\leq \|S_M\|_{cb}$ if $\M$ is a QWEP von Neumann algebra.}
\end{rk}

We start by easy examples that can be found in \cite{AP2}.

\begin{lemma}\label{schur1}
Let $(\alpha_k)\in \ell_p(\bZ)$ for $0<p\leq1$ and assume that $(f_k)\in \ell_\infty(I)^\bZ$ and
$(g_k)\in \ell_\infty(J)^\bZ$ are bounded families. Then $M$ given by
$m_{i,j}=\sum_k \alpha_k f_k(i)g_k(j)$ is a $p$-completely bounded Schur multiplier with $$\|M\|_{pcb}\leq \|(\alpha_k)\|_p \sup_k \|f_k\|_\infty.\|g_k\|_\infty.$$
\end{lemma}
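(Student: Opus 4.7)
The plan is to decompose $M$ as an $\ell_p$-combination of rank-one symbols and close with the $p$-subadditivity of the $L_p$-quasinorm recalled in the introduction. Set $M_k = (f_k(i)g_k(j))_{i\in I, j\in J}$, so that formally $M = \sum_k \alpha_k M_k$. The sum converges entrywise since $(\alpha_k) \in \ell_p(\bZ) \subset \ell_1(\bZ)$ for $p \leq 1$ and the entries of the $M_k$ are uniformly bounded by $\sup_k \|f_k\|_\infty \|g_k\|_\infty$.

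Each rank-one Schur multiplier $S_{M_k}$ is implemented by conjugation with diagonal operators. Writing $D^f_k = \diag(f_k(i))_{i\in I} \in B(\ell_2(I))$ and $D^g_k = \diag(g_k(j))_{j\in J} \in B(\ell_2(J))$, one has
\[
(S_{M_k} \tens Id_{L_p(\M)})(A) = (D^f_k \tens 1)\, A\, (D^g_k \tens 1),
\]
and the standard left/right module bound on noncommutative $L_p$-spaces gives
\[
\|(S_{M_k} \tens Id_{L_p(\M)})(A)\|_p \leq \|f_k\|_\infty\, \|g_k\|_\infty\, \|A\|_p.
\]

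Combining these uniform estimates with $p$-subadditivity yields, for any $A \in S^p_{I,J}[L_p(\M)]$,
\begin{align*}
\|(S_M \tens Id_{L_p(\M)})(A)\|_p^p
&\leq \sum_k |\alpha_k|^p\, \|(S_{M_k} \tens Id_{L_p(\M)})(A)\|_p^p \\
&\leq \Big(\sup_k \|f_k\|_\infty\, \|g_k\|_\infty\Big)^p \Big(\sum_k |\alpha_k|^p\Big)\, \|A\|_p^p,
\end{align*}
and taking $p$-th roots delivers the stated bound. The only technical point I anticipate is justifying that the series $\sum_k \alpha_k (S_{M_k} \tens Id_{L_p(\M)})(A)$ actually converges to $(S_M \tens Id_{L_p(\M)})(A)$ in $S^p_{I,J}[L_p(\M)]$: the same estimate applied to tails $\sum_{k\notin F}$ shows that the partial sums are Cauchy in the $p$-quasinorm, so they converge to some $T(A)$; one checks entrywise on finitely supported $A$ that $T(A) = S_M(A)$ and extends to the whole space by density.
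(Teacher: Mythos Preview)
Your argument is correct and is essentially the paper's own proof: decompose $M$ as an $\ell_p$-sum of rank-one symbols, observe that each rank-one Schur multiplier is $p$-completely bounded with norm at most $\|f_k\|_\infty\|g_k\|_\infty$ (you make the diagonal-conjugation picture explicit where the paper simply says this is ``clear''), and conclude via the $p$-triangular inequality. The convergence justification you add is a reasonable extra care not spelled out in the paper.
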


\begin{proof}
It is clear that a rank one symbol $M_k=(f_k(i)g_k(j))_{i\in I,j\in J}$ defines a
$p$-completely bounded Schur multiplier with norm $\|f_k\|_\infty.\|g_k\|_\infty$
for all $p$ and $k$. The result
then follows by the $p$-triangular inequality.
\end{proof}

We will often use permanence properties of $pcb$-Schur multipliers.

\begin{lemma}\label{perm}
Assume $M=(m_{i,j})_{i\in I, j\in J}$ is a $p$-completely bounded Schur multiplier, 
then 
\begin{itemize}
\item $M'=(m_{i,j})_{i\in I', j\in J'}$ with $I'\subset I, \, J'\subset J$, then 
$\|M'\|_{pcb}\leq \|M\|_{pcb}$.
\item $M'= (m_{i,j})_{(i,k)\in I\times K, (j,l)\in J\times L}$ for any non empty sets $K, L$, then $\|M'\|_{pcb}=\|M\|_{pcb}$.
\end{itemize}
\end{lemma}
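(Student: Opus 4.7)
The plan is to handle the two items separately, using only that $L_p(\M)$-valued Schatten quasi-norms are well behaved under (a) compression by projections and (b) regrouping of indices.

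For the restriction statement, given $A' \in S^p_{I',J'}[L_p(\M)]$ I would extend it to $A \in S^p_{I,J}[L_p(\M)]$ by putting zeros on the complementary indices. This extension is an isometry because it amounts to conjugating by coordinate projections $P_{I'}$, $P_{J'}$ viewed as partial isometries in the larger $B(\ell_2)$, so $\|A\|_p = \|A'\|_p$. Since $S_M \circ A$ has zero entries outside $I' \times J'$ and coincides with $S_{M'}(A')$ on $I' \times J'$, applying once more that compressions $x \mapsto P_{I'} x P_{J'}$ are contractive on $L_p$ (a basic property of Haagerup $L_p$-spaces valid for $0 < p \leq \infty$) yields
$$\|S_{M'}(A')\|_p = \|P_{I'}(S_M(A)) P_{J'}\|_p \leq \|S_M(A)\|_p \leq \|M\|_{pcb}\,\|A'\|_p.$$
Taking supremum over $\M$ and $A'$ gives $\|M'\|_{pcb} \leq \|M\|_{pcb}$.

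For the inflation statement, the inequality $\|M\|_{pcb} \leq \|M'\|_{pcb}$ is immediate from the first item by restricting $M'$ to the subsets $I \times \{k_0\}$ and $J \times \{l_0\}$ for any fixed $k_0 \in K$, $l_0 \in L$. For the reverse inequality, I would use the canonical identification
$$S^p_{I \times K,\, J \times L}\bigl[L_p(\M)\bigr] \;\cong\; S^p_{I,J}\bigl[L_p\bigl(B(\ell_2(K \cup L)) \mathbin{\bar\otimes} \M\bigr)\bigr],$$
which just regroups the block structure; this is true for all $0 < p \leq \infty$ since both sides are the same subspace of $L_p$ of the product von Neumann algebra $B(\ell_2((I \cup J)\times(K \cup L))) \mathbin{\bar\otimes} \M$. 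Under this identification the map $S_{M'} \otimes Id_{L_p(\M)}$ becomes exactly $S_M \otimes Id_{L_p(\td\M)}$, with $\td\M = B(\ell_2(K \cup L)) \mathbin{\bar\otimes} \M$, so the definition of $\|M\|_{pcb}$ (supremum over \emph{all} von Neumann algebras) gives $\|S_{M'} \otimes Id\| \leq \|M\|_{pcb}$, and thus $\|M'\|_{pcb} \leq \|M\|_{pcb}$.

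The only slightly delicate point, which I expect to be the main obstacle, is the regrouping identification when $p<1$; but it is a formal consequence of the fact that the Haagerup $L_p$-norm is intrinsic to the ambient von Neumann algebra and does not depend on the choice of matrix unit indexing. Once the identification is granted, both inequalities are one-line arguments, and this is presumably why the authors state the lemma without proof.
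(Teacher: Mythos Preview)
Your proof is correct and follows essentially the same route as the paper. For the restriction, the paper phrases the compression step as composition with the rank-one Schur multiplier $P=(1_{I'}(i)1_{J'}(j))$ (which has $pcb$-norm $1$ by Lemma~\ref{schur1}) rather than as conjugation by coordinate projections, but this is the same operation; for the inflation, the paper likewise invokes tensorisation with $B(\ell_2(L),\ell_2(K))\otimes\M$, exactly your regrouping argument.
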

\begin{proof}
We view $\ell_2(I')$, $\ell_2(J')$ as subspaces of $\ell_2(I)$,
$\ell_2(J)$. Let $P=(1_{I'}(i)1_{J'}(j))_{i\in I, j\in J}$, this is a
rank one $p$-completely bounded Schur multiplier with norm 1, and $S_{M'}$ 
coincides with the restriction of $S_P\circ S_{M}$ to matrices indexed by $I'\times J'$.

The second point is classical using tensorisation with $B(\ell_2(L),\ell_2(K))\otimes \M$ instead of $\M$.
\end{proof}

Since finitely supported matrices are dense in $S_{I,J}^p[L_p(\M)]$, we also have
\begin{lemma}\label{approx}
If $(M_n)\in M_{I,J}^\bN$ is  a bounded sequence of $pcb$-Schur multipliers converging
pointwise to some $M$, then $M$ is also a  $pcb$-Schur multiplier with 
$\|M\|_{pcb}\leq \lim \|M_n\|_{pcb}$.
\end{lemma}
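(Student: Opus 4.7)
The plan is to combine the density of finitely supported matrices in $S^p_{I,J}[L_p(\M)]$ (just invoked before the statement) with the fact that on any fixed finite support the Schur action $S_{M_n}$ only depends on finitely many scalars, which converge.

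First I would fix an arbitrary von Neumann algebra $\M$ and a matrix $A=(a_{i,j})\in S^p_{I,J}[L_p(\M)]$ supported on a finite set $F\subset I\times J$. Then $S_{M_n}(A)-S_M(A)=\sum_{(i,j)\in F}(m^{(n)}_{i,j}-m_{i,j})\,a_{i,j}\otimes e_{i,j}$ is a finite sum with scalar coefficients tending to $0$, so $\|S_{M_n}(A)-S_M(A)\|_p\to 0$. Using the $p$-triangle inequality (valid for $0<p\leq\infty$) and the hypothesis, I deduce
$$\|S_M(A)\|_p\leq \liminf_n \|S_{M_n}(A)\|_p\leq \bigl(\liminf_n \|M_n\|_{pcb}\bigr)\,\|A\|_p.$$
Thus $S_M$ is bounded by $\liminf_n \|M_n\|_{pcb}$ on the dense subspace of finitely supported elements. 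Since this subspace is dense in $S^p_{I,J}[L_p(\M)]$ and the bound $\sup_n\|M_n\|_{pcb}<\infty$ makes the constant finite, $S_M\otimes \id_{L_p(\M)}$ extends uniquely by continuity to the whole space with the same norm bound. Taking the supremum over all von Neumann algebras $\M$ yields $\|M\|_{pcb}\leq\liminf_n \|M_n\|_{pcb}$, which is the announced inequality.

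There is no real obstacle here: the only subtle point is to note that for $p<1$ one is working with a $p$-normed space rather than a normed space, but density of finitely supported matrices, lower semicontinuity of the quasi-norm under pointwise limits on finite support, and bounded extension by continuity all still hold in this quasi-Banach setting.
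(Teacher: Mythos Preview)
Your proof is correct and follows exactly the approach the paper indicates: the paper does not give a separate proof of this lemma but simply records it as an immediate consequence of the density of finitely supported matrices in $S^p_{I,J}[L_p(\M)]$, which is precisely what you carry out in detail. Your remark about the $p$-normed setting for $p<1$ is the only point requiring care, and you handle it correctly.
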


\begin{rk}{\rm 
  A $p$-completely bounded Schur multiplier $M$ for $p\leq 1$ is
  automatically $q$-completely bounded for $p<q\leq \infty$. Indeed,
  the extreme points of the unit ball of $S^1$ are rank one
  matrices, but for those matrices the $S^1$ and $S^p$ norms coincide. Thus
 $S_M$ must be bounded on $S^1$. But bounded Schur
  multipliers on $S^1$ are automatically $1$-completely bounded (see \cite{P}).
 Thus we get the result on $S^q$
  for all $1\leq q\leq \infty$ by complex interpolation and duality. The case $
p<q\leq 1$ also follows by interpolation.}
\end{rk}

 The following is a suitable adaptation of classical arguments (see
 \cite{BKS, Rot, Rot2}). We use the measured space 
$L_2( [0,2\pi]^2, \frac 1{(2\pi)^2}dm_2)$ where $m_2$ is the Lebesgue measure.

\begin{lemma}\label{sob}
Let $K:[0,2\pi]\times [0,2\pi]\to \bC$ be a $2\pi$-periodic continuous
function  such that for any $d\geq 0$ $\frac
{\partial^{d+1}}{(\partial y )^d\partial x}K$ is 
continuous.  Then $M=(K(x,y))_{x,y\in [0,2\pi]}$ is a $p$-completely
bounded Schur multiplier for all $0<p\leq 1$ with for  $d>1/p$
$$\|M\|_{pcb}\leq C\Big(\frac 2 {dp-1}+2\Big)^{1/p} \Big(\big\|\frac
      {\partial^{d+1}}{(\partial y )^d\partial
        x}K\big\|_2+\big\|\frac {\partial^{d}}{(\partial y
        )^d}K\big\|_2 + \big\|\frac
      {\partial}{\partial x}K\big\|_2+\big\|K\big\|_2\Big),$$
where $C$ is a universal constant. Moreover if $M_i=(K_i(x,y))_{x,y\in [0,2\pi]}$ is a family of matrices  as above, indexed by $i\in I$, 
then $$M=(K_i(x,y))_{(x,i)\in [0,2\pi]\times I, y\in [0,2\pi]}$$ or its transpose satisfies
$$\|M\|_{pcb}\leq  C\Big(\frac 2 {dp-1}+2\Big)^{1/p} \sup_i\Big(\big\|\frac
      {\partial^{d+1}}{(\partial y )^d\partial
        x}K_i\big\|_2+\big\|\frac {\partial^{d}}{(\partial y
        )^d}K_i\big\|_2 + \big\|\frac
      {\partial}{\partial x}K_i\big\|_2+\big\|K_i\big\|_2\Big).$$
\end{lemma}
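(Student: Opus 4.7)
The plan is to reduce to Lemma \ref{schur1} via the partial Fourier decomposition of $K$ in the $y$-variable. Write
$$K(x,y) = \sum_{\ell \in \bZ} c_\ell(x)\, e^{i\ell y}, \qquad c_\ell(x) = \frac{1}{2\pi}\int_0^{2\pi} K(x,y) e^{-i\ell y}\, dy,$$
so that $m_{x,y} = \sum_\ell c_\ell(x)\cdot e^{i\ell y}$ is a rank-one expansion whose $y$-factors $g_\ell(y) = e^{i\ell y}$ have uniform norm $1$. Lemma \ref{schur1} then yields $\|M\|_{pcb}^p \leq \sum_{\ell \in \bZ} \|c_\ell\|_\infty^p$.

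The central step is to bound $\|c_\ell\|_\infty$ with decay $|\ell|^{-d}$ for $\ell \neq 0$. Integration by parts $d$ times in $y$, justified by the continuity hypothesis on the derivatives of $K$, gives
$$c_\ell(x) = \frac{1}{(i\ell)^d}\cdot\frac{1}{2\pi}\int_0^{2\pi} \frac{\partial^d K}{(\partial y)^d}(x,y)\, e^{-i\ell y}\, dy, \quad \ell \neq 0.$$
Applying the one-dimensional Sobolev embedding $H^1(\bT) \hookrightarrow L^\infty(\bT)$ in $x$ to $c_\ell$ gives $\|c_\ell\|_\infty \leq C_0 (\|c_\ell\|_{L^2_x} + \|c_\ell'\|_{L^2_x})$. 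Each norm on the right is an $L^2_x$-norm of a single partial Fourier coefficient (in $y$) of $\partial^d_y K$, respectively $\partial_x\partial^d_y K$, hence bounded by the full $L^2$-norm of that function via Plancherel in $y$. Combining yields
$$\|c_\ell\|_\infty \leq \frac{C_0}{|\ell|^d}\Bigl(\bigl\|\tfrac{\partial^d K}{(\partial y)^d}\bigr\|_2 + \bigl\|\tfrac{\partial^{d+1} K}{(\partial y)^d\partial x}\bigr\|_2\Bigr) \quad (\ell \neq 0),$$
together with the residual estimate $\|c_0\|_\infty \leq C_0(\|K\|_2 + \|\partial_x K\|_2)$.

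Summing the $p$-th powers and using the hypothesis $dp > 1$ to control $\sum_{\ell\neq 0}|\ell|^{-dp} \leq 2 + 2/(dp-1)$ delivers the announced estimate. For the family case, apply the same Fourier expansion for each $i$: writing $m_{(x,i),y} = \sum_\ell c_{i,\ell}(x)\, e^{i\ell y}$, Lemma \ref{schur1} applies with $\alpha_\ell = \sup_i \|c_{i,\ell}\|_\infty$, and each pointwise estimate of $\|c_\ell\|_\infty$ is simply modified by taking $\sup_i$ of the relevant $L^2$-norms of derivatives of $K_i$. The transpose case follows from the symmetry of Lemma \ref{schur1} in the roles of $f_k$ and $g_k$.

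The main obstacle is the bookkeeping of constants when $p < 1$: since $\ell^p$ is only a quasi-norm, merging the $\ell = 0$ contribution with the geometric tail into a bound of the form $(\sum \|\cdot\|_2)$ rather than $(\sum \|\cdot\|_2^p)^{1/p}$ is awkward, because $(\sum_{i=1}^4 a_i^p)^{1/p} \geq \sum_i a_i$ for $p \leq 1$. However, with only four terms appearing the resulting slack is bounded and absorbed into the universal constant $C$, at the cost of replacing $2/(dp-1) + 2$ by an equivalent expression of the same form.
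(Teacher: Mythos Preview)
Your proof is correct and follows essentially the same approach as the paper's: both expand $K$ in the $y$-Fourier series, bound $\|c_\ell\|_\infty$ with decay $|\ell|^{-d}$, and apply Lemma~\ref{schur1}. The only cosmetic difference is that you invoke the Sobolev embedding $H^1(\bT)\hookrightarrow L^\infty(\bT)$ in the $x$-variable as a black box, whereas the paper proves it inline by writing $c_\ell(x)=\sum_k\alpha_{k,\ell}e^{ikx}$ and bounding $\sum_k|\alpha_{k,\ell}|$ via Cauchy--Schwarz against $(1/k)_{k\neq 0}\in\ell^2$; the family and transpose cases are handled identically in both arguments.
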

Of course, this is relevant only if the above $\sup$ is finite.
\begin{proof}
We rely on Fourier expansions, put $e_k(x)=e^{{\rm i}kx}$ and
$h_{k,l}(x,y)=e_k(x)e_l(y)$ . As $(h_{k,l})_{k,l\in \bZ}$ is an
orthonormal basis in $L_2( [0,2\pi]^2, \frac 1{(2\pi)^2}dm_2)$, we
have the equality in $L_2$, $K= \sum_{l,k\in \bZ} \alpha_{k,l}
h_{k,l}$ where $\alpha_{k,l}=\langle
K,h_{k,l}\rangle=\frac 1{(2\pi)^2}\int_{0}^{2\pi}\int_{0}^{2\pi} K(x,y) e^{-{\rm
    i}kx}e^{-{\rm i}ly}dydx$. 

Assume for the moment that $\l\neq0$. Integrating by part in $y$, we get for
$d\geq 0$, $\alpha_{k,l}= \frac {1} {({\rm i}l)^d}
\langle\frac {\partial^{d}}{(\partial y )^d}K,h_{k,l}\rangle$. Let
$\beta_{k,l}=({\rm i}l)^d\alpha_{kl}$. When $k\neq 0$, another integration by part  with
respect to $x$ gives $\beta_{k,l}=\frac 1 {{\rm i}k} \frac 1{(2\pi)^2}\int_{0}^{2\pi}\int_{0}^{2\pi}
\frac {\partial^{d+1}}{(\partial y )^d\partial x}K(x,y) e^{-{\rm i}kx}e^{-{\rm
    i}ly}dydx$. The Cauchy-Schwarz inequality gives that
 $$\sum_{k\neq 0} |\beta_{k,l}|\leq \Big(\sum_{k\neq 0} \frac 1
{k^2}\Big)^{1/2}\Big(\sum_{k\neq 0} \big|\langle \frac
{\partial^{d+1}}{(\partial y )^d\partial x}K,
h_{k,l}\rangle\big|^2\Big)^{1/2}\leq C \Big\|\frac
{\partial^{d+1}}{(\partial y )^d\partial x}K\Big\|_2.$$ Thus
$\sum_{k\in \bZ} |\beta_{k,l}|\leq C \big\|\frac
{\partial^{d+1}}{(\partial y )^d\partial x}K\big\|_2+\big\|\frac
{\partial^{d}}{(\partial y )^d}K\big\|_2=C_d$ independent from $l$ and
one can define a continuous function $f_l(x)=\sum_{k\in \bZ} \frac
1{{\rm i}^d}\beta_{k,l} e_k(x)$ bounded by $C_d$. 

In the same way to deal with $l=0$,
$f_0(x)=\sum_{k\in \bZ} \alpha_{k,0} e_k(x)$ is a continuous function. Indeed as above
$$\sum_{k\in \bZ} |\alpha_{k,0}|\leq \Big(\sum_{k\neq 0} \frac 1
{k^2}\Big)^{1/2}\Big(\sum_{k\neq 0} \big|\langle \frac
{\partial}{\partial x}K,
h_{k,0}\rangle\big|^2\Big)^{1/2}   + |\alpha_{0,0}|,$$
$f_0$ is bounded  by $C_0=C \big\|\frac
{\partial}{\partial x}K\big\|_2+\big\|K\big\|_2$. 

 Choosing $d> \frac 1 p$, we can conclude to the
pointwise equality
\begin{equation}\label{fact} K(x,y)=f_0(x)e_0(y)+ \sum_{l\neq 0} \frac 1 {l^d}  f_l(x) e_l(y).\end{equation}
The result follows directly from Lemma \ref{schur1} by choosing
$I=\bZ$, $\alpha=(1_{k\neq 0}\frac 1 {k^d}+ 1_{k=0})_k\in \ell_p(\bZ)$, $f_k$ as
above and $g_k=e_k$.  Obviously $\sup_k\|g_k\|_\infty=1$, $\sup_k\|f_k\|_\infty\leq C_d+C_0$ and $\|\alpha\|_p\leq \Big(\frac
2 {dp-1}+2\Big)^{1/p}$.

The second statement also follows from Lemma \ref{schur1} since
 in \eqref{fact}, the
factorization in $y$ and the sequence $\alpha$ is independent from $K_i$.
We have  $K_i(x,y)=f_0^i(x)e_0(y)+ \sum_{l\neq 0} \frac 1 {l^d}  f_l^i(x) e_l(y)$,
hence we can again take $\alpha=(1_{k\neq 0}\frac 1 {k^d}+1_{k=0})_k\in \ell_p(\bZ)$, 
$f_k(x,i)=f_k^i(x)$  and $g_k(y)=e_k(y)$. We also have $\sup_k\|f_k\|_\infty\leq C_d+C_0$.

The same holds for the transpose of $M$ as the condition in Lemma \ref{schur1}
is invariant by transposition.
\end{proof}
The Sobolev constant (of order $d$) for $K$ will mean the quantity $$\big\|\frac
      {\partial^{d+1}}{(\partial y )^d\partial
        x}K\big\|_2+\big\|\frac {\partial^{d}}{(\partial y
        )^d}K\big\|_2 + \big\|\frac
      {\partial}{\partial x}K\big\|_2+\big\|K\big\|_2.$$
Let $\theta\in ]0,1[$, for $x,y\geq 0$ recall that 
\begin{equation}\label{intfor}\frac {x^\theta- y^\theta}{x-y}= \int_0^1  \frac\theta{(tx+(1-t)y)^{1-\theta}} dt,\end{equation}
where the left hand side has to be understood as $\theta x^{\theta-1}$ if $x=y$.

\begin{cor}\label{theta2}
The matrix $N=\Big(\frac {x^\theta- y^\theta}{x-y}\Big)_{x\geq0 ,y\in [1,2]}$ defines a
 $p$-completely bounded Schur multiplier for $0<p\leq 1$ with 
$\|N\|_{pcb}\leq C_{p}$ for some constant depending only on $p$.
\end{cor}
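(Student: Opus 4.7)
The idea is a dyadic decomposition of the unbounded row index $[0,\infty)$ combined with the second statement of Lemma~\ref{sob}. For $k\in\bZ$ set $A_k=[2^k,2^{k+1})$, so that the $A_k$ partition $(0,\infty)$. Parametrise $A_k\ni x=2^k u$ with $u\in[1,2)$; the homogeneity $K(\lambda x,\lambda y)=\lambda^{\theta-1}K(x,y)$ yields
\[ K_k(u,y):=K(2^k u,y)=2^{k(\theta-1)}K(u,y/2^k)\qquad(u\in[1,2],\,y\in[1,2]). \]
The row $x=0$ is a single row and may be discarded by density, and $(u,k)\mapsto 2^k u$ is a bijection of $[1,2)\times\bZ$ onto $(0,\infty)$; so up to a bijective reindexing of rows, $N$ is the matrix $(K_k(u,y))_{(u,k),y}$.

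To apply Lemma~\ref{sob}, reparametrise $[1,2]$ affinely onto $[\pi/2,3\pi/2]\subset[0,2\pi]$ and multiply by a fixed smooth cutoff $\chi\otimes\chi$, where $\chi\in C^\infty(\bR)$ is $2\pi$-periodic, equal to $1$ on $[\pi/2,3\pi/2]$ and vanishing near $\{0,2\pi\}\pmod{2\pi}$. This produces $2\pi$-periodic $C^\infty$ functions $\tilde K_k$ on $[0,2\pi]^2$ whose restrictions to $[\pi/2,3\pi/2]^2$ are the reparametrised $K_k$. If one shows $\sup_k\|\tilde K_k\|_{\mathrm{Sob},d}<\infty$ for some $d>1/p$, the second statement of Lemma~\ref{sob} gives $\|M\|_{pcb}\leq C_p$ for the matrix $M=(\tilde K_k(s,t))_{(s,k)\in[0,2\pi]\times\bZ,\,t\in[0,2\pi]}$, and restricting $s,t$ to $[\pi/2,3\pi/2]$ via Lemma~\ref{perm} together with the reindexing yields $\|N\|_{pcb}\leq C_p$.

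The main work, and the principal obstacle, is therefore the uniform Sobolev estimate. Since $\chi\otimes\chi$ is fixed, it suffices to bound the four $L^2$ norms
\[ \|K_k\|_2,\qquad \|\partial_u K_k\|_2,\qquad \|\partial_y^d K_k\|_2,\qquad \|\partial_u\partial_y^d K_k\|_2 \]
on $[1,2]^2$ uniformly in $k\in\bZ$. From differentiation of \eqref{intfor},
\[ \partial_x^a\partial_y^b K(x,y)=c_{a,b}\int_0^1 t^a(1-t)^b(tx+(1-t)y)^{\theta-a-b-1}\,dt, \]
and splitting this integral at the value of $t$ where $tx$ and $(1-t)y$ are of equal order, one obtains pointwise estimates for $\partial_x^a\partial_y^b K$. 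After multiplying by the chain-rule factor $2^{ka}$ coming from $\partial_u=2^k\partial_x$, each of the derivatives above is bounded on $[1,2]^2$ by a constant times $2^{k(\theta-1)}$ for $k\geq 0$, and symmetrically by $2^{k\theta}$ for $k\leq 0$ (the latter being obtained from the former via the homogeneity identity $K_k(u,y)=2^{k(\theta-1)}K(u,y/2^k)$ and the symmetry $K(x,y)=K(y,x)$, which reduce the situation to the case where the first argument dominates). Since $\theta\in(0,1)$, both bounds are uniformly $\leq 1$, so $\sup_k\|\tilde K_k\|_{\mathrm{Sob},d}<\infty$ and the conclusion follows.
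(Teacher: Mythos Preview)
Your approach is correct and takes a genuinely different route from the paper's. The paper splits $[0,\infty)$ at $x=1/2$: for $0\le x\le 1/2$ it exploits the separation $|x-y|\ge 1/2$ to handle $\frac{1}{x-y}$ as a smooth periodic kernel via Lemma~\ref{sob} and then composes with the trivially $pcb$ multiplier $(x^\theta-y^\theta)$; for $x\ge 1/2$ it uses \emph{integer} shifts $K_i(x,y)=\phi(x)\phi(y)\frac{(x+i)^\theta-y^\theta}{(x+i)-y}$, $i\in\bN$, together with the second statement of Lemma~\ref{sob}. Your single dyadic decomposition combined with the homogeneity of $K$ is more uniform, treating all of $(0,\infty)$ by one mechanism; the paper's two-piece argument, on the other hand, sidesteps the singularity of $\partial_x K$ at $x=0$ by the factorisation trick and never needs to track how the chain-rule factor $2^{k}$ compensates it.

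Two points in your sketch deserve care when you write it out. First, the claimed rate $C\,2^{k\theta}$ for $k\le 0$ is wrong for the terms with no $u$-derivative: since $K_k(u,y)\to y^{\theta-1}$ and $\partial_y^d K_k(u,y)\to c_d\,y^{\theta-1-d}$ as $k\to-\infty$, these are $O(1)$, not $o(1)$. Only the $\partial_u$-terms actually decay like $2^{k\theta}$ (the factor $2^{k}$ from $\partial_u=2^k\partial_x$ cancels the $x^{\theta-1}$ blow-up of $\partial_x K$ near $x=0$). This does not affect the conclusion, which only needs uniform boundedness. Second, the symmetry reduction you invoke swaps the two variables, but the Sobolev constant in Lemma~\ref{sob} is asymmetric (order $d$ in $y$, only order $1$ in $x$); after the swap you therefore need $|\partial_x^a\partial_y^b K(v,u)|$ estimates for $a$ up to $d$ on the large-$v$ side. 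These do follow from \eqref{intfor} by the same integral-splitting argument, but they are not literally the $a\le 1$ bounds already established in your $k\ge 0$ case, so the reduction is not quite as automatic as stated.
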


\begin{proof}
First we start by showing that
  $\Big(\frac {x^\theta-
  y^\theta}{x-y}\Big)_{0\leq x\leq 1/2 ,y\in [1,2]}$ is a $pcb$-Schur multiplier.

We fix a $\C^\infty$ function $\phi:[-\pi,\pi]\to [0,1]$
         with support in $[-1/4, 3/4]$ that is identically
        1 on $[0,1/2]$ and another $\C^\infty$ function $\psi:[0,2\pi]\to [0,1]$
        with support in $[7/8, 3]$ that is identically 1 on $[1,2]$.
        We define $K(x,y)=\phi(x)\psi(y)\frac {1}{x-y}$
        on $[-\pi,\pi]\times[0,2\pi]$. It is $\C^\infty$ and can be extended
        to a $2\pi$-periodic $\C^\infty$ function. Thus Lemma \ref{sob} and a
        restriction  yield that $\Big(\frac {1}{x-y}\Big)_{0\leq x\leq
          1/2 ,y\in [1,2]}$ is a $pcb$-Schur multiplier. Then
        one just need to compose it with
        $\big({x^\theta-y^\theta}\big)_{0\leq x\leq 1/2 ,y\in [1,2]}$
        which is also clearly a $pcb$-Schur multiplier by Lemma \ref{schur1}.

        Next we show that $\Big(\frac {x^\theta- y^\theta}{x-y}\Big)_{x\geq 1/2 ,y\in
          [1,2]}$ is also  a $pcb$-Schur multiplier.
        
        This time we fix a $\C^\infty$ function $\phi:[0,2\pi]\to [0,1]$
with support in $[1/4, 3]$ that is identically 1 on $[1/2,2]$.

For $i\geq 0$, one uses $K_i(x,y)=\phi(x)\phi(y)\frac
{(x+i)^\theta-y^\theta}{(x+i)-y}$ on $[0,2\pi]^2$. It is clear that
$K_i$ can be extended to a $\C^\infty$ $2\pi$-periodic function.  By
construction, for $x$ and $y$ in the support of $\phi$ and $t\in[0,1]$, 
the smallest value of $t(x+i)+(1-t)y$ is bigger 
than $1/4$. Thus, the formula \eqref{intfor} shows that any derivative of
order $l$ of $\frac {x^\theta- y^\theta}{x-y}$ on the support of $K_i$
is bounded by $4^{l+1-\theta}\theta(1-\theta)...(l-\theta)$. Thus
using the chain rule, one sees that $K_i$ and its derivatives up to
order $d+1$ are uniformly bounded independently of $i$ and
$\theta$. Thus the same holds for the Sobolev constant in Lemma
\ref{sob} for $K_i$.

Lemma \ref{sob} gives that
$\big(K_i(x,y)\big)_{(x,i)\in [0,2\pi]\times \bN,y\in
  [0,2\pi]}$ is $pcb$. By Lemma \ref{perm}, we can conclude by restricting $x$ to $[1/2,3/2[\times \bN\simeq [1/2,\infty[$ (via $(x,i)\mapsto
        x+i$) and $y$ to $[1,2]$.

        The Corollary follows by gluing the two pieces together.
\end{proof}

\begin{cor}\label{theta4}
For $k\in \bZ$, the matrix $M_{k}=\Big(\frac {x^\theta-
  y^\theta}{x-y}\Big)_{x\geq 0,y\in [2^{-k-1},2^{-k}]}$ is a
$p$-completely bounded Schur multiplier for $0<p\leq 1$ with
$\|M_k\|_{pcb}\leq C_{p}2^{-k(\theta-1)}$ for some constant depending only on $p$.
\end{cor}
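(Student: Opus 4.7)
The plan is to reduce Corollary \ref{theta4} directly to Corollary \ref{theta2} by rescaling, exploiting the homogeneity of the function $t\mapsto t^\theta$.

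First I would observe that for any $\lambda>0$,
\[
\frac{(\lambda x)^\theta-(\lambda y)^\theta}{\lambda x-\lambda y}=\lambda^{\theta-1}\,\frac{x^\theta-y^\theta}{x-y}.
\]
Setting $\lambda=2^{-(k+1)}$, $x'=2^{k+1}x$, $y'=2^{k+1}y$, the condition $y\in[2^{-k-1},2^{-k}]$ becomes $y'\in[1,2]$, while $x\geq 0$ becomes $x'\geq 0$. Therefore, up to the bijective reindexing $(x,y)\mapsto(x',y')$, the matrix $M_k$ is exactly $2^{-(k+1)(\theta-1)}$ times the matrix $N$ of Corollary \ref{theta2}.

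Next I would note that bijective reindexing of the row and column index sets does not affect the $pcb$-norm (this is immediate from the definition, since it amounts to conjugating by unitary permutations on the $\ell_2$-factors, which is trivially compatible with the Schur product). Consequently,
\[
\|M_k\|_{pcb}=2^{-(k+1)(\theta-1)}\|N\|_{pcb}\leq C_p\,2^{-(k+1)(\theta-1)}\leq C_p\,2^{-k(\theta-1)},
\]
absorbing the harmless factor $2^{\theta-1}\leq 1$ into the constant.

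There is no genuine obstacle here: the whole content is that the scaling invariance of the divided difference of $t^\theta$ lets one translate the estimate already proved at scale $y\in[1,2]$ to every dyadic scale, with the correct homogeneity factor $2^{-k(\theta-1)}$.
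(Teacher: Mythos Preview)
Your approach is exactly the paper's: a homogeneity argument reducing to Corollary~\ref{theta2} via the change of variables $x\leftrightarrow 2^{-k-1}x$, $y\leftrightarrow 2^{-k-1}y$. One minor slip in your last line: the ratio $2^{-(k+1)(\theta-1)}/2^{-k(\theta-1)}$ equals $2^{1-\theta}$, which lies in $(1,2)$, not $2^{\theta-1}\leq 1$; so the displayed inequality $C_p\,2^{-(k+1)(\theta-1)}\leq C_p\,2^{-k(\theta-1)}$ is false as written, but the factor $2^{1-\theta}\leq 2$ is still harmless and can be absorbed into $C_p$.
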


\begin{proof}
This is obvious by homogeneity from Corollary \ref{theta2} with a
change of variables  $x\leftrightarrow 2^{-k-1}x$, $y\leftrightarrow 2^{-k-1}y$.
\end{proof}

\begin{rk}\label{exchange}{\rm
One can exchange the roles of $x$ and $y$.}
\end{rk}

It will be convenient to redefine $M_{-1}$, gathering all $k\leq -1$:
\begin{cor}\label{theta5}
The matrix 
$M_{-1}=\Big(\frac {x^\theta- y^\theta}{x-y}\Big)_{x\geq 0,y\geq 1}$ is a
 $p$-completely bounded Schur multiplier for $0<p\leq 1$ with 
$\|M_{-1}\|_{pcb}\leq C_{p}\big(\frac 1 {1-\theta}\big)^{1/p}$ for some constant depending only on $p$.
\end{cor}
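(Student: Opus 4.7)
The plan is to patch together the estimates of Corollary \ref{theta4} across dyadic intervals in the $y$-variable. First, I decompose $[1,\infty)=\bigsqcup_{k\leq -1}J_k$ with $J_k=[2^{-k-1},2^{-k})$: the submatrix of $M_{-1}$ indexed by $[0,\infty)\times J_k$ is exactly the matrix $M_k$ of Corollary \ref{theta4}, hence a $pcb$-Schur multiplier with $\|M_k\|_{pcb}\leq C_p\,2^{-k(\theta-1)}=C_p\,2^{k(1-\theta)}$.

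Next, given a finitely supported $A\in S^p_{I,J}[L_p(\M)]$ with $I=[0,\infty)$, $J=[1,\infty)$ (which is enough by density of finitely supported matrices), I decompose along columns $A=\sum_{k\leq -1}A_k$, where $A_k$ is the restriction of $A$ to columns in $J_k$; only finitely many $A_k$ are nonzero. Column restriction is Schur multiplication by the rank-one symbol $1_I(i)1_{J_k}(j)$, which has $pcb$-norm at most $1$ by Lemma \ref{schur1}, so $\|A_k\|_p\leq\|A\|_p$ for each $k$. After extending each $M_k\circ A_k$ by zero, we have $M_{-1}\circ A=\sum_k M_k\circ A_k$, and the $p$-triangle inequality for $L_p$ quasi-norms gives
\[
\|M_{-1}\circ A\|_p^p\leq \sum_k \|M_k\circ A_k\|_p^p\leq \sum_k \|M_k\|_{pcb}^p\|A_k\|_p^p\leq \|A\|_p^p\sum_{k\leq -1}\|M_k\|_{pcb}^p.
\]

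The final step is to sum the resulting geometric series: by the bound on $\|M_k\|_{pcb}$ above,
\[
\sum_{k\leq -1}\|M_k\|_{pcb}^p\leq C_p^p\sum_{j\geq 1}2^{-jp(1-\theta)}=C_p^p\,\frac{2^{-p(1-\theta)}}{1-2^{-p(1-\theta)}},
\]
and the elementary estimate $1-2^{-t}\geq c\min(t,1)$ applied with $t=p(1-\theta)$ shows that this is bounded by $C_p'/(1-\theta)$ for a new constant $C_p'$ depending only on $p$ (the factor $1/p$ arising when $p(1-\theta)<1$ being absorbed into $C_p'$). Taking $p$-th roots yields the announced inequality $\|M_{-1}\|_{pcb}\leq C_p\bigl(\tfrac{1}{1-\theta}\bigr)^{1/p}$.

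I do not foresee any substantive obstacle: the proof is a routine dyadic assembly of the uniform estimates of Corollary \ref{theta4}. The factor $(1-\theta)^{-1/p}$ is forced by the geometric ratio $2^{-p(1-\theta)}$ tending to $1$ as $\theta\to 1$, and the $p$-triangle inequality for $p\leq 1$ is precisely what makes the summation of norms in the $p$-th power legitimate.
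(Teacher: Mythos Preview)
Your proof is correct and follows essentially the same route as the paper: a dyadic decomposition of $[1,\infty)$ in the $y$-variable, an application of Corollary \ref{theta4} on each piece, and the $p$-triangle inequality to sum the resulting geometric series in $2^{-p(1-\theta)}$. The paper's proof is more terse (it simply invokes the $p$-triangular inequality and the fact that $(2^{k(\theta-1)})_{k\geq 0}\in \ell_p(\bN)$), but you have supplied the same argument with the column-restriction details spelled out.
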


\begin{proof}

Writing $[1,\infty[=\cup_{k\geq 0} [2^k,2^{k+1}[$ and using the previous Corollary
for each piece, this follows from 
the $p$-triangular inequality as $(2^{k(\theta-1)})_{k\geq 0}\in \ell_p(\bN)$.
Since $\|(2^{k(\theta-1)})_{k\geq 0}\|_p\leq c_p (1-\theta) ^{-1/p}$ for some constant $c_p$. We get that $\|M_{-1}\|_{pcb}\leq   c_p C_p(1-\theta) ^{-1/p}$ where $C_p$ comes from \ref{theta4}.
\end{proof}

\begin{rk}{\rm
   The kernel in formula \eqref{intfor} is positive definite 
because for  $x,y>0$  
$\frac {x^\theta- y^\theta}{x-y}= c_\theta\int_{\bR_+} t^{\theta}\frac 1{x+t}\frac 1{y+t}{dt}$ for some $c_\theta>0$. Using this fact and similar arguments, one can check that
  there is some $C$ so that $\|M_{-1}\|_{pcb}\leq C$ for all $0<\theta<1$ and all $p\geq 1$. }
  \end{rk}

We now turn to another family of multipliers.

\begin{cor}\label{theta+}
For $a\geq 1$, the matrix 
$H_a=\Big(\frac {1}{a+x+y}\Big)_{x,y\in [0,1]}$ is a $p$-completely bounded Schur multiplier for $0<p\leq 1$ with 
 $\|H_a\|_{pcb}\leq C_{p}/a$ for some constant $C_p$ depending only on $p$. 
\end{cor}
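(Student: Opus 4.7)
The plan is to apply Lemma \ref{sob} to a smoothly cut-off version of the kernel $K(x,y)=\frac{1}{a+x+y}$. The key observation is that on (a neighbourhood of) $[0,1]^2$ with $a\geq 1$, the denominator is bounded below by $a$, so $K$ is $C^\infty$ there and every partial derivative gains an extra factor of $1/(a+x+y)\leq 1/a$.

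First I would fix, once and for all, a $C^\infty$ function $\phi:[0,2\pi]\to[0,1]$ which is identically $1$ on $[0,1]$ and whose support (together with every derivative) is contained in a compact subset of the open interval $(0,2\pi)$. Set $K_a(x,y)=\phi(x)\phi(y)/(a+x+y)$ on $[0,2\pi]^2$. Because $\phi$ and all its derivatives vanish near $0$ and $2\pi$, $K_a$ extends to a $2\pi$-periodic $C^\infty$ function on $\bR^2$, and the support condition together with $a\geq 1$ forbids any singularity.

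Next I would control the Sobolev constant of $K_a$. Using Leibniz, each of $K_a$, $\partial_x K_a$, $\partial_y^d K_a$ and $\partial_x\partial_y^d K_a$ is a finite linear combination of terms of the shape $\phi^{(i)}(x)\phi^{(j)}(y)(-1)^k k!/(a+x+y)^{k+1}$ with $k\geq 0$. Since $a\geq 1$, one has $1/(a+x+y)^{k+1}\leq 1/a$ throughout $[0,2\pi]^2$, so every such term is pointwise bounded by $C_d/a$ with $C_d$ depending only on $d$ and the fixed $\phi$; the $L_2$ norms on $[0,2\pi]^2$ are then bounded by the same constant times $(2\pi)^2$. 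Hence the whole Sobolev constant of order $d$ for $K_a$ is at most $C_d/a$.

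Finally, given $p\in(0,1]$ I would pick any integer $d>1/p$ and apply Lemma \ref{sob} to $K_a$, getting $\|(K_a(x,y))_{x,y\in[0,2\pi]}\|_{pcb}\leq C_p/a$. Since $\phi\equiv 1$ on $[0,1]$, $K_a$ coincides with $1/(a+x+y)$ on $[0,1]^2$, and restricting $x,y$ to $[0,1]$ via Lemma \ref{perm} yields $\|H_a\|_{pcb}\leq C_p/a$. I do not anticipate a real obstacle: the entire strategy is already packaged in Lemma \ref{sob}, and the only point needing care is that the smooth cut-off does not spoil the $1/a$ scaling, which holds because every Leibniz term retains at least one factor $1/(a+x+y)$.
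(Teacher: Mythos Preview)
Your approach is exactly the one in the paper: smoothly cut off $1/(a+x+y)$, observe that every partial derivative carries at least one factor $1/(a+x+y)\leq 1/a$, feed this into Lemma~\ref{sob}, and restrict. There is, however, a small slip in your choice of cut-off: a function $\phi:[0,2\pi]\to[0,1]$ that is identically $1$ on $[0,1]$ cannot have support in a compact subset of the open interval $(0,2\pi)$, since that would force $\phi(0)=0$. The paper avoids this by working on $[-\pi,\pi]$ with $\phi$ supported in $[-1/4,5/4]$ and identically $1$ on $[0,1]$; then $\phi$ vanishes near $\pm\pi$ and the periodic extension is indeed $C^\infty$. With that adjustment (or any equivalent shift of the fundamental domain so that the boundary does not meet $[0,1]$) your argument goes through verbatim.
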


\begin{proof}
As for Corollary \ref{theta2} take a smooth function
$\phi:[-\pi,\pi]\to [0,1]$ that is supported on $[-1/4,5/4]$ such that
$\phi(t)=1$ for $t\in [0,1]$. Define $K(x,y)=\frac 1{a+x+y}
\phi(x)\phi(y)$ on $[-\pi,\pi]$ and make it $2\pi$-periodic so that it is
$\C^\infty$. Using the chain rule, one easily sees that the Sobolev norms from
Lemma \ref{sob} are dominated by $C_p/a$.
\end{proof}

\begin{cor}\label{theta+2}
Given $a,b\geq 0$ with $a+b>0$, one has 
$$\Big\|\Big(\frac {x^\theta\pm y^\theta}{x+y}\Big)_{x\geq a,y\geq b}\Big\|_{pcb}\leq C_{p}\big(\frac 1 {1-\theta}\big)^{1/p}
\max \{a,b\}^{\theta-1},$$ for some constant $C_{p}$ depending on $p$.
\end{cor}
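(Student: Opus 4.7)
The strategy is homogeneity reduction, then the minus case via factorisation through Corollary \ref{theta5}, and the plus case directly by dyadic Sobolev estimates.

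By the $(\theta-1)$-homogeneity of $K^\pm(x,y)=(x^\theta\pm y^\theta)/(x+y)$ under $(x,y)\mapsto(\lambda x,\lambda y)$ and the re-labelling invariance of the $pcb$-norm (second part of Lemma \ref{perm}), the task reduces to $\max\{a,b\}=1$. Using symmetry in $(x,y)$ (the minus case is antisymmetric, which does not affect the $pcb$-norm, and the plus case is symmetric), I may take $a=1\ge b\ge 0$, and by restriction (first part of Lemma \ref{perm}) the worst case is $b=0$. So the goal is to prove the bound $C_p(1-\theta)^{-1/p}$ on $\{x\ge 1,\,y\ge 0\}$.

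For the minus sign, I would use the factorisation
$$\frac{x^\theta-y^\theta}{x+y}=\frac{x-y}{x+y}\cdot\frac{x^\theta-y^\theta}{x-y}.$$
Corollary \ref{theta5} combined with Remark \ref{exchange} bounds the second factor on $\{x\ge 1,\,y\ge 0\}$ by $C_p(1-\theta)^{-1/p}$. Since entrywise products of $pcb$-Schur multipliers correspond to compositions of their operators, $pcb$-norms are sub-multiplicative; thus the problem reduces to showing that $A(x,y)=(x-y)/(x+y)$ is a $pcb$-multiplier with a $\theta$-independent constant $C_p$. The kernel $A$ is bounded and degree-zero homogeneous; writing $A=-1+2x/(x+y)$ and applying a dyadic decomposition in $y$ together with Lemma \ref{sob} on each rescaled block (with the bump-function extension of Corollary \ref{theta2}), the $p$-triangular inequality combined with the rank-one asymptotics peeled off at $y\to 0$ and $y\to\infty$ yields the required uniform bound.

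For the plus sign the analogous factorisation is unavailable, since the companion kernel $(x^\theta+y^\theta)/(x-y)$ is singular on the diagonal. I would treat the plus kernel directly by dyadic-and-rescale. On the block $[2^l,2^{l+1})\times[2^k,2^{k+1})$ with $l\ge k\ge 0$, the rescaling $(x,y)\to(2^l\tilde x,2^k\tilde y)$ converts the entries into $2^{l(\theta-1)}K_m^+(\tilde x,\tilde y)$, where
$$K_m^+(\tilde x,\tilde y)=\frac{\tilde x^\theta+2^{-m\theta}\tilde y^\theta}{\tilde x+2^{-m}\tilde y},\qquad m=l-k\ge 0.$$
Every derivative of $K_m^+$ on $[1,2]^2$ is bounded independently of $m\ge 0$ and of $\theta\in(0,1)$: this uses the uniform inequality $|\theta(\theta-1)\cdots(\theta-j+1)|\le (j-1)!$ together with $2^{-m},2^{-m\theta}\le 1$. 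Lemma \ref{sob} then yields $\|K_m^+\|_{pcb}\le C_p$ uniformly, and gluing the blocks via the $p$-triangular inequality with the geometric sum $\sum_{l\ge 0}2^{lp(\theta-1)}\le C/(p(1-\theta))$ produces the stated factor $(1-\theta)^{-1/p}$. The residual strip $y\in[0,1]$ is treated by splitting $(x^\theta+y^\theta)/(x+y)=x^\theta/(x+y)+y^\theta/(x+y)$ and combining rank-one bounds on $x^{\theta-1}$ (for $x\ge 1$) and $y^\theta$ (for $y\in[0,1]$) with Corollary \ref{theta+} applied to $1/(x+y)$ after the translation $x=1+x'$.

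The principal technical obstacle is the uniform control of the Sobolev constants for the rescaled kernels $K_m^\pm$ in both the dyadic scale $m$ and the parameter $\theta\to 1^-$, and verifying that the $p$-subadditivity across dyadic annuli in \emph{one} variable produces only a single factor of $(1-\theta)^{-1/p}$ rather than a higher power; the degree-zero auxiliary factor $(x-y)/(x+y)$ in the minus case is delicate for the same reason and is the piece that forced the split via $A=-1+2x/(x+y)$.
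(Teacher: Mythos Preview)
Your factorisation approach for the minus sign has a fatal flaw: the auxiliary kernel $A(x,y)=(x-y)/(x+y)$ is \emph{not} a bounded Schur multiplier on $S^p$ for any $p\le 1$. This is precisely the content of Davies' result \cite{Dav}, and the paper itself points it out in Section~5: the multiplier $\big(\tfrac{x-y}{x+y}\big)_{x\ge 1,\,y\ge 1}$ (and by homogeneity on all of $\{x,y\ge 0\}$) is unbounded for $p=1$, hence for $p<1$. Since $\{x\ge 1,\,y\ge 1\}$ sits inside your domain $\{x\ge 1,\,y\ge 0\}$, Lemma~\ref{perm} rules out any uniform bound for $A$ there. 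Your proposed dyadic-in-$y$ treatment of $2x/(x+y)$ cannot succeed either: the kernel is degree-zero homogeneous, so every diagonal block $[2^k,2^{k+1})^2$ contributes the \emph{same} nonzero $pcb$-norm after rescaling, and the $p$-triangular inequality over infinitely many such blocks diverges. In short, the minus case cannot be reduced to Corollary~\ref{theta5} in this way.

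The paper's route avoids the issue by a different, simpler factorisation that treats both signs at once. After the affine change $x\leftrightarrow a(1+x)$, $y\leftrightarrow a(t+y)$ (so the domain becomes $\{x,y\ge 0\}$), one writes
\[
\frac{(1+x)^\theta\pm(t+y)^\theta}{1+t+x+y}
=\big((1+x)^\theta\pm(t+y)^\theta\big)\cdot\frac{1}{1+t+x+y}.
\]
The first factor is a sum or difference of two rank-one symbols, hence trivially $pcb$. The second factor is handled by Corollary~\ref{theta+} after a single dyadic decomposition keyed to $\max\{x,y\}$ (blocks $I_k\times J_k$ and $J_{k+1}\times I_k$ with $I_k=[2^k,2^{k+1})$, $J_k=[0,2^k)$), which produces just one geometric series $\sum_{k\ge 0}2^{-kp(1-\theta)}$ and hence a single power $(1-\theta)^{-1/p}$. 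Your plus-sign sketch, with its double dyadic grid $[2^l,2^{l+1})\times[2^k,2^{k+1})$, would without further care yield $\sum_{l,k}2^{\max(l,k)p(\theta-1)}\asymp(1-\theta)^{-2}$, i.e.\ the wrong exponent; you flag this concern yourself but do not resolve it. The key idea you are missing is to keep one variable in a full half-line block $[0,2^k)$ rather than slicing both.
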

\begin{proof}
Without loss of  generality we may assume $a\geq b$. 

By a change of variable $x\leftrightarrow a(1+x)$ and $y\leftrightarrow a(t+y)$, with $t=b/a$, it boils down to show that
$\Big\|\Big(\frac {(1+x)^\theta\pm (t+y)^\theta}{1+t+x+y}\Big)_{x\geq 0,y\geq
  0}\Big\|_{pcb}$ is bounded independently of $t\in]0,1]$. 

 We use a dyadic decomposition related to $\max\{x,y\}$.  Assume $x\in
 I_k=[2^k,2^{k+1}[$ and $y\in J_k=[0,2^{k}[$.  
Then by homogeneity and a change of variables $x\leftrightarrow
 2^k(x+1)$, $y\leftrightarrow 2^ky$
$$\Big\|\Big(\frac {1}{1+t+x+y}\Big)_{x\in I_k,y\in J_k}\Big\|_{pcb}=
 2^{-k}\Big\|\Big(\frac {1}{1+2^{-k}(1+t)+x+y}\Big)_{x\in [0,1[,y\in
       [0,1[}\Big\|_{pcb}.$$ Setting $a=1+2^{-k}(1+t)\in [1,3]$ and
         using Corollary \ref{theta+} the latter multiplier is bounded
         by $C_p 2^{-k}$.  The multiplier $\Big(((1+x)^\theta\pm
         (t+y)^\theta)1_{x\in I_k,y\in J_k}\Big)$ is bounded by a
         fixed multiple of $2^{k\theta}$.  Thus $\Big\|\Big(\frac
         {(1+x)^\theta\pm (t+y)^\theta}{1+t+x+y}\Big)_{x\in I_k,y\in
           J_k }\Big\|_{pcb}\leq C_p 2^{-k(1-\theta)}$. A similar
         estimate holds for the same symbol if $x\in J_{k+1}$ and
         $y\in I_k$ or $x,y\in [0,1[$ (with $k=0$). The sets
 $[0,1[^2$, $J_{k+1}\times I_k$, $I_k\times J_k$ for $k\geq 0$ form a partition
of $[0,\infty[^2$ into product sets. Thus, 
the $p$-triangular inequality gives 
$$\Big\|\Big(\frac {(1+x)^\theta\pm (t+y)^\theta}{1+t+x+y}\Big)_{x\geq 0,y\geq
  0}\Big\|_{pcb}^p\leq C_p^p + 2\sum_{k\geq 0}C_p^p  2^{-p(1-\theta)k}\leq C_p^p \frac 1 {1-\theta}.$$
\end{proof}

\section{Ando's inequality in semi-finite algebras}

Schur multipliers are intimately related to perturbations of the
functional calculus of selfadjoint operators. Illustrations can be found in
\cite{BS, A, AP, AP2, PS, PS2, PST} and many other references. 

Indeed let $f:\bR \to \bR$ be a (continuous) function.  Assume that
$x,y$ are selfadjoint elements in some semi-finite von Neumann algebra
$(\M,\tau)$  with finite discrete spectra $(x_i)_{i\in I}$ and $(y_j)_{j\in
  J}$ and associated spectral projections $p_i\in \M$ and $q_j\in\M$. As
$x=\sum_i x_i p_i$ and $y=\sum_j y_i q_j$, with $\sum_i p_i=\sum_j q_j=1$, one has
\begin{equation}\label{intdif} 
f(x)-f(y)=\sum_{i,j} p_i (f(x_i)-f(y_j))q_j=\sum_{i,j} p_i 
\frac{f(x_i)-f(y_j)}{x_i-y_j} (x-y)q_j,\end{equation}
where $\frac{f(u)-f(v)}{u-v}$ can take any value if $u=v\in \bR$.

 The map $T:z\mapsto \sum_{i,j} p_i
 \frac{f(x_i)-f(y_j)}{x_i-y_j} zq_j$ is very close to be a Schur multiplier
with symbol the divided differences of $f$.

\begin{lemma}\label{sf}
For any $0<p\leq \infty$, let $M=(m_{i,j})$ be a $p$-completely bounded Schur multiplier then the following map  $T_M:\M\to \M$
$$T_{M}(z)=\sum_{i,j} m_{i,j} p_i  zq_j$$
extends to a bounded map on $L_p(\M)$ with $\|T_M\|_{L_p(\M)\to L_p(\M)}\leq \|M\|_{pcb}$.
\end{lemma}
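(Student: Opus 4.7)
The plan is to factor $T_M$ as $v \circ (S_M \otimes \id_{L_p(\M)}) \circ u$, where $u$ and $v$ are contractions connecting $L_p(\M)$ with $S^p_{I,J}[L_p(\M)]$, so that the desired bound reads off directly from the hypothesis on $M$.

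Concretely, I would introduce an auxiliary index $0 \notin I \cup J$ and consider the column $P = \sum_{i\in I} e_{i,0} \otimes p_i$ and the row $Q = \sum_{j\in J} e_{0,j} \otimes q_j$, viewed inside $B(\ell_2(I \cup J \cup \{0\})) \otimes \M$. Since the families $(p_i)$ and $(q_j)$ are mutually orthogonal projections summing to $1$, one has $P^*P = 1$ and $QQ^* = 1$, and in particular $\|P\|_\infty = \|Q\|_\infty = 1$. Define $u(z) := PzQ$ and $v(A) := P^* A Q^*$; the module-type inequality $\|aXb\|_p \leq \|a\|_\infty \|X\|_p \|b\|_\infty$, valid on noncommutative $L_p$ for all $0 < p \leq \infty$ via the generalized singular value description (cf.\ \cite{FK}), shows that both maps are contractive. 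Unpacking the definitions, one gets $u(z) = \sum_{i,j} e_{ij} \otimes p_i z q_j$ and $v\bigl((a_{ij})\bigr) = \sum_{i,j} p_i a_{ij} q_j$.

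Then $v \circ (S_M \otimes \id_{L_p(\M)}) \circ u$ sends $z$ to $\sum_{i,j} m_{i,j}\, p_i (p_i z q_j) q_j = \sum_{i,j} m_{i,j}\, p_i z q_j = T_M(z)$, where only the idempotence $p_i^2 = p_i$ and $q_j^2 = q_j$ is used. Since $\|S_M \otimes \id_{L_p(\M)}\|_{S^p_{I,J}[L_p(\M)] \to S^p_{I,J}[L_p(\M)]} \leq \|M\|_{pcb}$ by the very definition of $pcb$, the bound $\|T_M\|_{L_p \to L_p} \leq \|M\|_{pcb}$ follows.

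The only point that requires care is the validity of the module inequality in the quasi-normed regime $p < 1$, which does not come from ordinary Hölder duality; once this standard fact is granted, the argument is purely formal. Note also that finiteness of $I$ and $J$ (automatic in the discrete-spectrum application above) makes all sums literal, so no convergence issue arises in building $P$, $Q$, or in the identity $T_M = v \circ (S_M \otimes \id) \circ u$.
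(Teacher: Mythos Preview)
Your proof is correct and is essentially the same as the paper's: both factor $T_M$ as $\rho\circ(S_M\otimes\id)\circ\pi$ where $\pi(z)=PzQ$ and $\rho(A)=P^*AQ^*$, with $P$ the column of the $p_i$'s and $Q$ the row of the $q_j$'s, and both use that $P,Q$ are contractions in $\M_\infty$ so that left/right multiplication is contractive on $L_p$. Your explicit remark that the module inequality for $p<1$ comes from the singular-value description rather than H\"older duality is a useful clarification, but otherwise the argument is identical to the paper's.
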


\begin{proof}
Recall that we assume that $I$ and $J$ are finite, $(p_i)_{i\in I}$  and $(q_j)_{j\in J}$ are orthogonal projections in $\M$ summing up to 1. The maps $\pi : \M\to B(\ell_2(J),\ell_2(I))\tens \M$ and $\rho: B(\ell_2(J),\ell_2(I))\tens \M\to \M$ defined by 
$$\pi(z)= (p_izq_j)_{i\in I, j\in J}=
 \left(\begin{array}{c}p_1\\ \vdots
   \\ p_{|I|}\end{array}\right) z 
\left(\begin{array}{ccc} q_1 & \dots& q_{|J|}\end{array}\right)$$
$$\rho((X_{i,j}))=\sum_{i,j} p_iX_{i,j}q_j=\left(\begin{array}{ccc}
  p_1 & \dots& p_{|I|}\end{array}\right)(X_{i,j})
\left(\begin{array}{c}q_1\\ \vdots \\ q_{|J|}\end{array}\right) $$
clearly extend to contractions at $L_p$-levels respectively on $\M$ and $B(\ell_2(J),\ell_2(I))\tens \M$ since the column and
row matrices in the above products are contractions. Since
$T_M= \rho \circ (S_M\otimes Id_{L_p(\M)}) \circ \pi$, we get the lemma.  
\end{proof}

 We want to deal with  homogeneous functions of
selfadjoint operators, namely $f(x)=|x|^\theta$ or 
$f(x)={\rm sgn} (x) |x|^\theta=x_+^\theta-x_-^\theta$.

We aim to prove the following theorem that we call Ando's inequality.

\begin{thm}\label{ando2}
Let $0<\theta<1$ and $0<p\leq \infty$ then there exists $C_{p,\theta}$ so that for any  von Neumann algebra $\M$, and $x,y\in L_p(\M)^{sa}$ one has
\begin{equation}\label{main}\| |x|^\theta-|y|^\theta\|_{p/\theta}\leq C_{p,\theta}
\|x-y\|^\theta_p, \qquad \|{\rm sgn} (x) |x|^\theta-{\rm sgn} (y) |y|^\theta
\|_{p/\theta}\leq
C_{p,\theta} \|x-y\|^\theta_p.\end{equation}
\end{thm}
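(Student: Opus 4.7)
The plan is to reduce the theorem first to the positive case $x,y\ge 0$ with $f(t)=t^\theta$, and to prove that positive case for semi-finite $\M$ via a dyadic decomposition of $y$ combined with the Schur multiplier estimates from Section~2, relying crucially on the homogeneity $f(at)=a^\theta f(t)$. The type~III extension is deferred to Section~4 via the weak-type argument mentioned in the introduction.

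For the sign reduction, I would write $f(x)-f(y)=\Phi_K(x-y)$ as a double operator integral with symbol $K(\lambda,\mu)=\frac{f(\lambda)-f(\mu)}{\lambda-\mu}$, and split the integration into the four sign quadrants of $(\lambda,\mu)\in\bR^2$ via the spectral sign-projections of $x$ and $y$. The two same-sign quadrants yield terms structurally identical to the positive case. The two mixed-sign quadrants yield terms whose symbol takes the form $\pm\frac{|\lambda|^\theta+|\mu|^\theta}{|\lambda|+|\mu|}$; after a further dyadic breakup in $\min(|\lambda|,|\mu|)$ to handle the singularity at the origin, Corollary~\ref{theta+2} combined with Lemma~\ref{sf} and summation via the $p$-triangular inequality delivers the required bound for these cross terms.

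For the positive case, decompose using the spectral projections $e_k=\chi_{[2^{-k-1},2^{-k})}(y)$ for $k\in\bZ$ and $e_\infty=\chi_{\{0\}}(y)$. The operators $\{(x^\theta-y^\theta)e_k\}_k$ and $x^\theta e_\infty$ (noting $ye_\infty=0$) have mutually orthogonal right supports, so
\[
\|x^\theta-y^\theta\|_{p/\theta}^{p/\theta}=\|x^\theta e_\infty\|_{p/\theta}^{p/\theta}+\sum_{k\in\bZ}\|(x^\theta-y^\theta)e_k\|_{p/\theta}^{p/\theta}.
\]
The $e_\infty$ piece satisfies $\|x^\theta e_\infty\|_{p/\theta}\le\|xe_\infty\|_p^\theta=\|(x-y)e_\infty\|_p^\theta$ via the Hansen operator-concavity inequality $e_\infty x^{2\theta}e_\infty\le(e_\infty x^2 e_\infty)^\theta$ followed by monotonicity of $\tau(\,\cdot\,^{p/(2\theta)})$ under operator inequalities between positives. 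For a generic $e_k$ piece, homogeneity reduces it to the normalized regime $y\in[1,2]$ via the rescaling $(x,y,x-y)\mapsto(2^{k+1}x,2^{k+1}y,2^{k+1}(x-y))$; in that regime Corollary~\ref{theta2} yields a $pcb$-Schur multiplier bound uniform in the spectra, giving $\|(x^\theta-y^\theta)e_k\|_p\le C_p\|(x-y)e_k\|_p$ via Lemma~\ref{sf}. Upgrading this $L_p$ estimate to an $L_p\to L_{p/\theta}$ bound with the required $\theta$-power is the central technical step; it should be achieved by combining the Schur multiplier estimate with an operator-norm control on $(x^\theta-y^\theta)e_k$ (coming from $y\in[1,2]$, with the large-$x$ regime handled separately via Corollary~\ref{theta5}, whence the $(1-\theta)^{-1/p}$ factor) and the Hölder interpolation $\|A\|_{p/\theta}^{p/\theta}\le\|A\|_p^p\|A\|_\infty^{p/\theta-p}$.

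The main obstacle is precisely this passage from the $L_p$-level Schur multiplier estimate to the $L_p\to L_{p/\theta}$ inequality with the sharp $\theta$-power on $\|x-y\|_p$. A direct Hölder decomposition of $x-y$ cannot produce the exponent balance needed to have $\|x-y\|_p$ on the right and the $L_{p/\theta}$-norm on the left, so the argument must exploit homogeneity of $f$, the commutation of $e_k$ with $y$ (which confines $y$ to a dyadic interval of length $\sim 2^{-k}$), and a careful splitting of the spectrum of $x$ into a bounded and an unbounded part, so that the per-piece estimates aggregate (after taking $(p/\theta)$-th powers and summing over $k$) into a bound controlled by $\|x-y\|_p^p$.
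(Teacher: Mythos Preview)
Your decomposition has two genuine gaps. First, the claimed equality $\|x^\theta-y^\theta\|_{p/\theta}^{p/\theta}=\sum_k\|(x^\theta-y^\theta)e_k\|_{p/\theta}^{p/\theta}$ is false: mutual orthogonality of right supports does not yield an $\ell^q$-splitting of the $L_q$-norm for any $q\neq 2$ (test it on a rank-one $z$ and coordinate projections). At best you get the inequality $\leq$ from the $q$-triangle inequality when $q=p/\theta\leq 1$, a reduction you have not made. Second, and more fatally, even granting a per-piece estimate of the form $\|(x^\theta-y^\theta)e_k\|_{p/\theta}\leq C\|(x-y)e_k\|_p^\theta$, summing the $(p/\theta)$-th powers would require $\sum_k\|(x-y)e_k\|_p^p\leq C\|x-y\|_p^p$, and this is \emph{false} for $p<2$ by the same rank-one example: a one-sided slicing of $y$ alone cannot aggregate. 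Your proposed $L_\infty$ control on $(x^\theta-y^\theta)e_k$ also fails where the spectrum of $x$ is large, and Corollary~\ref{theta5} supplies only an $L_p$-level Schur bound, not an operator-norm bound; slicing $x$ dyadically as well merely transfers the aggregation obstruction to the other side.

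The paper's route is different in kind: rather than decompose the operators, it \emph{normalizes the perturbation}. After reducing to $p\leq\theta$ by transitivity (so $p/\theta\leq 1$), the core Case~1 assumes $\|x-y\|_\infty\leq 2$ and $\|x-y\|_{p/2}\leq 2$ simultaneously. One then slices in \emph{both} spectra via $r_k=1_{I_k}(x)$, $v_k=1_{J_{k-1}}(y)$; the spectral cutoffs give $\|r_k(x-y)v_k\|_\infty\lesssim 2^{-k}$, the normalization gives $\|r_k(x-y)v_k\|_{p/2}\leq 2$, and H\"older between $L_{p/2}$ and $L_\infty$ yields $\|r_k(x-y)v_k\|_{p/\theta}\lesssim 2^{-k(1-\theta/2)}$. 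Applying the Schur multipliers of Corollaries~\ref{theta4}--\ref{theta5} at level $p/\theta$ then produces a geometrically convergent sum. The passage to general $x,y$ proceeds by writing $x-y=\sum t_iq_i$ with orthogonal projections $q_i$ of trace~$1$, scaling each increment into Case~1 by homogeneity, and summing via the $p/\theta$-triangle inequality; this step, not a direct H\"older on the pieces, is where the exponent $\theta$ on $\|x-y\|_p$ actually appears. Successive approximations then extend to arbitrary finite and semi-finite $\M$.
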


The reduction from type III to type II will be explained in the next
section so we only deal with semi-finite algebras here.

\begin{proof}

The result for $p=\infty$ follows from Theorem 4.1 in  \cite{AP3}.

The result for $f(x)={\rm sgn} (x) |x|^\theta$ when $1\leq p<\infty$
can be found in \cite{R}. The absolute value map $x\mapsto |x|$ is
bounded on $L_q(\M)$ provided that $1<q<\infty$. This is a result by
Davies \cite{Dav} for Schatten classes, that can be extended to all
semi-finite von Neumann algebras (see Remark 6.2 in \cite{CPSZ} for instance).
Thus for $1\leq p<\infty$, with $q=p/\theta$, we have an estimate $\|
|x|^\theta-|y|^\theta\|_{p/\theta}\leq C_{p/\theta} \|{\rm sgn} (x) |x|^\theta-{\rm
  sgn} (y) |y|^\theta \|_{p/\theta}$. Therefore the
 Theorem holds for $f(x)=|x|^\theta$ using it for 
$f(x)={\rm sgn} (x) |x|^\theta$ when $1\leq p<\infty$. 

\smallskip

We just need to
prove the Theorem for $p< 1$.

\smallskip

If the result holds for the couples $(p,\theta_1)$ and $(p/\theta_1,\theta_2)$, it also holds for $(p,\theta_1\theta_2)$. Indeed for instance if
$$\||x|^{\theta_1}- |y|^{\theta_1}\|_{p/\theta_1}\leq C_{p,\theta_1}\|x-y\|_p^{\theta_1} \quad\textrm{and} \quad
\||z|^{\theta_2}- |t|^{\theta_2}\|_{p/(\theta_1\theta_2)}\leq C_{p/\theta_1,\theta_2}\|z-t\|_{p/\theta_1}^{\theta_2},$$
then one gets with $z=|x|^{\theta_1}$, $t=|y|^{\theta_1}$:
$$\||x|^{\theta_1\theta_2}-
|y|^{\theta_1\theta_2}\|_{p/(\theta_1\theta_2)}\leq
C_{p/\theta_1,\theta_2}C_{p,\theta_1}^{\theta_2}\|x-y\|_p^{\theta_1\theta_2}.$$
Hence $C_{p,\theta_1\theta_2}\leq
C_{p/\theta_1,\theta_2}C_{p,\theta_1}^{\theta_2}$. By this
transitivity, we reduce the proof to $p\leq\theta$; indeed if $p>\theta$, then 
$(p,\theta)=(p.1, p.\theta/p)$ and the estimate follows from that for 
$(p,p)$ and $(1,\theta/p)$.

\smallskip

 We do it only for $f(t)=|t|^\theta$ as the other case is similar.

\smallskip

We prove the inequality from case to case regarding $\M$ and the values of $x$ and $y$.

\medskip

{\noindent \it Case 1:} We assume  that $x, y\in \M^{sa}$ with finite
discrete spectra and 
$\|x-y\|_\infty\leq 2$ and $\|x-y\|_{p/2}\leq 2$. We prove that $\| |x|^\theta-|y|^\theta\|_{p/\theta}\leq C_{p,\theta}$ for some $C_{p,\theta}>0$.

We denote by $x=\sum_{i\in I} x_i p_i$ and $y=\sum_{i\in J} y_j q_j$ the spectral decompositions of $x$ and $y$, so that $p_i=1_{\{x_i\}}(x)$ and $q_i=1_{\{y_i\}}(y)$.

\smallskip

 We will rely on the formula \eqref{intdif}, decompose
 $|x|^\theta-|y|^\theta=\sum_{i,j=-1}^1 a_i (|x|^\theta-|y|^\theta)b_i$
where $a_{-1}=1_{]\infty,0[}(x)$, $a_0=1_{\{0\}}(x)$ and $a_1=1_{]0,\infty[}(x)$ and similarly for $b_i$.

We use dyadic decompositions. Set $I_k=[2^{-k-1},2^{-k}[$,
    $J_k=]0,2^{-k-1}[$ for $k\geq 0$ and $I_{-1}=[1,\infty[$,
        $J_{-1}=]0,1[$, $J_{-2}=]0,\infty[$.  The definition is made
        so that the sets $I_k\times J_{k-1}$ and $J_k\times I_k$ are
        disjoint with union $\{(x,y)\in
      ]0,\infty[^2\;|\;\max\{x,y\}\in I_k\}$. Hence we have a
        partition $]0,+\infty[^2=\cup_{k\geq -1} (I_k\times
        J_{k-1}\cup J_k\times I_k)$. Define accordingly the maps
        $T^1_k(z)=\sum_{x_i\in I_k, y_j\in J_{k-1}} p_i
        \frac{x_i^\theta-y_j^\theta}{x_i-y_j} zq_j$,
        $T^2_k(z)=\sum_{x_i\in J_k, y_j\in I_k} p_i
        \frac{x_i^\theta-y_j^\theta}{x_i-y_j} zq_j$.  For any $0<a\leq
        1$, Lemma \ref{sf} says that $\|T_k^1\|_{L_a(\M)\to L_a(\M)}$ is
        dominated by
        $\Big\|\Big(\frac{x_i^\theta-y_j^\theta}{x_i-y_j}\Big)_{x_i\in
          I_k, y_j\in J_{k-1}}\Big\|_{acb}$. By Lemma \ref{perm}, this
        norm is smaller than
        $\Big\|\Big(\frac{x^\theta-y^\theta}{x-y}\Big)_{x\in \overline
          I_k,y\geq 0}\Big\|_{acb}$. Using Remark \ref{exchange} and
        Corollary \ref{theta4} for $k\geq 0$, this norm is bounded by
        $C_a 2^{-k(\theta-1)}$. When $k=-1$, using Corollary
        \ref{theta5} instead, we also get a bound by
        $C_{a,\theta}$. We have similar estimates for $\|T_k^2\|_{L_a(\M)\to L_a(\M)}$.

    Put $r_k=1_{I_k}(x)$, $s_k=1_{I_k}(y)$
    as well as $u_k=1_{J_{k}}(x)$, $v_k=1_{J_{k-1}}(y)$, we can write (note that the sums are actually finite) 
\begin{eqnarray*}a_1(|x|^\theta-|y|^\theta)b_1&=&\sum_{x_i>0, y_j>0} p_i
  \frac{x_i^\theta-y_j^\theta}{x_i-y_j} (x-y)q_j\\ & =&\sum_{k\geq -1}  T^1_k(x-y)+T_k^2(x-y)  \\&=&
  \sum_{k\geq -1}  T^1_k(r_k(x-y)v_k)+T_k^2(u_k(x-y)s_k).  
    \end{eqnarray*}
We use the above norm estimates for  $T^j_{k}$  with $a={p/\theta}$
 to get $\|T_k^j\|_{L_{p/\theta}(\M)\to L_{p/\theta}(\M)}\leq C_{p,\theta}
    2^{k(1-\theta)}$.  By the $p/\theta$-triangular inequality
$$\|a_1(|x|^\theta-|y|^\theta)b_1\|_{p/\theta}^{p/\theta}\leq C_{p,\theta}^{p/\theta}
\sum_{k\geq -1} 2^{k(1-\theta)p/\theta} (\|r_k (x-y)v_k\|_{p/\theta}^{p/\theta}+\|u_k 
(x-y)s_k\|_{p/\theta}^{p/\theta}) .$$
But by definition $0\leq r_k x, u_k x, yv_k, ys_k\leq 2^{-k}$ for $k\geq 0$
and $\|x-y\|_\infty\leq 2$, so that we have 
$\|r_k (x-y)v_k\|_\infty\leq \|r_k x-yv_k\|_\infty\leq 2^{-k}$, and also
$\|u_k x-ys_k\|_\infty\leq 2^{-k}$ for all $k\geq -1$ . But also
$\|r_k(x-y)v_k\|_{p/2},\|u_k(x-y)s_k\|_{p/2}
\leq \|x-y\|_{p/2}\leq 2$.  Thus as
$\theta/p=(\theta/2).2/p+(1-\theta/2)/\infty$, by the H\"older
inequality $\|r_k (x-y)v_k\|_{p/\theta}, \|u_k (x-y)s_k\|_{p/\theta}\leq 2.2^{-k(1-\theta/2)}$. This is enough to conclude that
$$\|a_1(|x|^\theta-|y|^\theta)b_1\|_{p/\theta}^{p/\theta}\leq 4C_{p,\theta}^{p/\theta} \sum_{k\geq -1} 2^{-kp/2}.$$

To deal with $ a_{1}(|x|^\theta-|y|^\theta)b_{-1}$, one can do exactly the same using Corollary \ref{theta+2}
as the Schur multipliers involved have shape $\Big( \frac
{a^\theta-b^{\theta}}{a+b}\Big)_{a\in I_k,b\in J_{k-1}}$, $\Big( \frac
{a^\theta-b^{\theta}}{a+b}\Big)_{a\in J_k,b\in I_k}$.

The terms $a_{-1}(|x|^\theta-|y|^\theta)b_1$ and $a_{-1}(|x|^\theta-|y|^\theta)b_{-1}$
can be treated similarly.

It is a well known fact that complex interpolation remains valid for
$L_p(\M,\tau)$ in the range $0<p\leq 1$, see Lemma
2.5 in \cite{PR} for what we need. Using it, one easily deals with the remaining terms
as for instance
$$\|a_0(|x|^\theta-|y|^{\theta})\|_{p/\theta}=\|a_0|y|^{\theta}\|_{p/\theta}\leq \|a_0|y|\|_p^\theta=\|a_0 y \|_p^\theta=\|a_0(x-y)\|_p^\theta\leq 2.$$

Gluing the pieces together, we find a constant $C_{p,\theta}$ so that \eqref{main}
holds in Case 1.
 
\medskip

 {\it\noindent  Case 2:} We assume  $\M$ finite, $x, y\in \M^{sa}$,   $y=x+q$ for some
projection $q$ with $\|q\|_p= 1$, that is $\tau(q)=1$. 

\smallskip

Consider $(x_n)$ a sequence in the von Neumann algebra generated by
$x$  that $|x_n|\leq |x|$,  $\|x_n-x\|_\infty\to 0$ and each $x_n$ has a finite spectrum, and
similarly for $(y_n)$ and $y$. 

Obviously $\||x_n|^\theta- |x|^\theta\|_{p/\theta}\to 0$ by the
dominated convergence theorem as $\M$ is finite (similarly for $y$). As
$\|x_n-y_n\|_t\to \|q\|_t=1$ for $t=\infty,p/2$, $x_n$ and $y_n$ satisfy the assumptions of Case 1 for $n$ big enough. Going to the limit in \eqref{main} for $x_n,y_n$, we get \eqref{main} for $x,y$ with the same $C_{p,\theta}$.

\medskip

{\noindent \it Case 3:}  We assume   $\M$ finite, $x, y\in \M^{sa}$  and $y=x+tq$ for some projection $q$ with $\tau(q)=1$ and 
$t\in \bR$. 

\smallskip

If $t>0$, this follows by homogeneity applying the result for $y/t$,
$x/t$ and $q$. If $t<0$, one just needs to exchange $x$ and $y$. The
constant $C_{p,\theta}$ is the same as in the previous cases.

\medskip

{\noindent \it Case 4:} We assume  $\M$ finite,  $x, y\in \M^{sa}$ and $y=x+\sum_{i=1}^n t_iq_i$ where $t_i\in \bR$ and $q_i$ are 
orthogonal projections with $\tau(q_i)=1$. 

\smallskip

Simply put $x_0=x$ and $x_k=x+\sum_{i=1}^k t_iq_i$ and use the $p/\theta$-triangular inequality and Case 3 for $x_{k-1}$ and $x_k$.
$$\| f(x)-f(y)\|_{p/\theta}^{p/\theta}\leq \sum_{i=1}^n\| f(x_{i-1})-f(x_{i})\|_{p/\theta}^{p/\theta}\leq C_{p,\theta}^{p/\theta} \sum_{i=1}^n 
\big(\|t_iq_i\|_p^\theta\big)^{p/\theta}.$$
But $\|x-y\|_p= \Big(\sum_{i=1}^n |t_i|^p\Big)^{1/p}$ and we get this case as $\|q_i\|_p=1$.

\medskip

{\noindent \it Case 5:} We assume  $\M$ finite, $x, y\in \M^{sa}$ and $y=x+\sum_{i=1}^n t_iq_i$ where $t_i\in \bR$ and $q_i$ are 
orthogonal projections with $\tau(q_i)\in \bQ$.

\smallskip

Consider the von Neumann algebra $(\tilde \M,\tilde \tau)$ where
$\tilde \M= \M\otimes L_\infty([0,1])$ with the trace $\tilde
\tau=N\tau\tens \int$ where $N$ is such that for all $i$, $N\tau(q_i)\in \bN$ .
Put $\tilde z=z\tens 1$ for $z\in \M$. We have $\tilde y-\tilde x=\sum_{i=1}^n 
t_i \tilde q_i$. For each $i$, and $k\leq N\tau(q_i)$, let $q_{i,k}=q_i\otimes
1_{[(k-1)/(N\tau(q_i)),k/(N\tau(q_i))]}$. The $(q_{i,k})_k$ are orthogonal projections 
with $\tilde \tau (q_{i,k})=1$ and $\sum_{k=1}^{N\tau(q_i)} q_{i,k}=\tilde q_i$.
By Case 4:
$$N^{\theta/p}\|f(x)-f(y)\|_{L_{p/\theta}(\M)}=\|f(\tilde x)-f(\tilde y)\|_{L_{p/\theta}(\tilde \M)}\leq C_{p,\theta}\|\tilde x-\tilde y\|_{L_{p}(\tilde \M)}^\theta= C_{p,\theta}\Big(
N^{1/p}\|x-y\|_{L_{p}(\M)}\Big)^\theta.$$
We still obtain the same constant $C_{p,\theta}$.

{\noindent \it Case 6:} We assume  $\M$  finite and $x, y\in \M^{sa}$.

\smallskip

By considering again $(\tilde \M,\tilde \tau)$ where $\tilde \M=
\M\otimes L_\infty([0,1])$ with the trace $\tilde \tau=\tau\tens
\int$, in a masa containing it, one can approximate $x-y$ for the
$L_1$-norm by elements of the form $\delta_k=\sum_{i=1}^{n_k}
t_{i,k}q_{i,k}$ where $\tilde \tau (q_{i,k})\in \bQ$. With 
$y_k=x+\delta_k$, $y_k$ converges to $y$ in $L_1$ it also
does in $L_{p}$ as $\tilde \M$ is finite. The Ando inequality for
the index $1/\theta$ also gives that $\|f(y_k)-f(y)\|_{1/\theta}\leq C_\theta\|y_k-y\|_1^\theta$, hence $f(y_k)$ converges to $f(y)$ in $L_{p/\theta}$. Since 
$x,y_k$ satisfy the assumptions of Case 5, the conclusion follows by
going to the limit in \eqref{main}.

\medskip

{\noindent \it Case 7}: We assume $\M$ semi-finite and $x,y\in L_p(\M)^{sa}$.

\smallskip

 This is again a matter of approximation.  By the semi-finiteness of
 $\M$ and functional calculus in a masa containing $x$, we may
 approximate $x$ in $L_p$ by elements of the form $x_n=\sum_{i=1}^N
 t_iq_n$ commuting with $x$ where $q_n$ are finite projections. We can
 as well assume that $f(x_n)$ converges to $f(x)$ in $L_{p/\theta}$,
 similarly for $y_n$ and $y$ (these are purely commutative
 results). Then $y_n$ and $x_n$ are in some finite subalgebra of $\M$
 and we have \eqref{main} for them by Case 6:
 $\|f(x_n)-f(y_n)\|_{p/\theta}\leq C_{p,\theta}
 \|x_n-y_n\|_p^\theta$. Taking again limits as $n\to \infty$ gives the
 result.
\end{proof}

\begin{rk}
{\rm One can get a proof of the case $p=\infty$ following case 1 (since $p/2=\infty$) and then case 6 directly. A careful analysis of the constant shows that $\limsup_{\theta\to 1} (1-\theta)C_{\infty,\theta}<\infty$ as in \cite{AP3}.}
\end{rk}

We slightly extend the result to
  $\tau$-measurable operators $L_0(\M,\tau)$. With its measure topology
  \cite{terp, FK}, it becomes a complete Hausdorff topological $*$-algebra (Theorem 28 in \cite{terp}) in which $\M$ is dense.
 We recall the Fatou Lemma in $L_0$ (Lemma 3.4 in
  \cite{FK}) if $v_n\to v$ in $L_0$ and then 
$\|v\|_q\leq \liminf \|v_n\|_q$ for all $q$. Moreover, the functional 
calculus associated
  to our $f(t)=|t|^\theta$ or $f(t)={\rm sgn} (t) |t|^\theta$ is continuous on $L_0^{sa}$ (Lemma 3.2 in \cite{Ray}).

\begin{thm}\label{ando2+}
Let $0<\theta<1$ and $0<p\leq \infty$ then there exists $C_{p,\theta}$ so that for any  semi-finite von Neumann algebra $(\M,\tau)$, and $x,y\in L_0(\M,\tau)^{sa}$ such that $x-y\in L_p(\M,\tau)$, then $|x|^\theta-|y|^\theta,{\rm sgn} (x) |x|^\theta-{\rm sgn} (y) |y|^\theta\in L_{p/\theta}(\M,\tau)$ and
\begin{equation*}\label{main2}\| |x|^\theta-|y|^\theta\|_{p/\theta}\leq C_{p,\theta}
\|x-y\|^\theta_p, \qquad \|{\rm sgn} (x) |x|^\theta-{\rm sgn} (y) |y|^\theta
\|_{p/\theta}\leq
C_{p,\theta} \|x-y\|^\theta_p.\end{equation*}
\end{thm}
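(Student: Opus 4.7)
The plan is to reduce Theorem~\ref{ando2+} to Theorem~\ref{ando2} by approximating $x$ and $y$ by elements of $L_p(\M)^{sa}$ on which the finite-$L_p$ result applies, and then passing to the limit using the $L_0$-continuity of $f$ together with the Fatou lemma in $L_0$. Throughout, write $z=x-y\in L_p(\M,\tau)$ and $f(t)=|t|^\theta$ or $f(t)={\rm sgn}(t)|t|^\theta$.

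For the approximation, I would first cut spectrally by setting $e_n=1_{[-n,n]}(x)$, $f_n=1_{[-n,n]}(y)$ and $g_n=e_n\wedge f_n\in\M$. Then $g_n\uparrow 1$ strongly and the bound $g_n\leq e_n\wedge f_n$ yields $\|xg_n\|_\infty,\|yg_n\|_\infty\leq n$, so $g_nxg_n,g_nyg_n\in\M$. Since $\M$ is semi-finite (and, after localising to the $\sigma$-finite sub-von Neumann algebra generated by $x,y,z$ and their spectral projections, so is this sub-algebra in a separably-approximable way), one can refine to choose finite projections $h_n\leq g_n$ with $h_n\to 1$ strongly by a diagonal argument.

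I would then set $x_n=h_nxh_n$ and $y_n=h_nyh_n$. These are self-adjoint, bounded by $n$, and supported on the finite projection $h_n$, hence $x_n,y_n\in L_p(\M)^{sa}$. Their difference is $x_n-y_n=h_nzh_n$, and the H\"older inequality (valid for all $0<p\leq\infty$) gives $\|x_n-y_n\|_p=\|h_nzh_n\|_p\leq\|z\|_p$. Applying Theorem~\ref{ando2} to the pair $(x_n,y_n)$ yields
$$\|f(x_n)-f(y_n)\|_{p/\theta}\leq C_{p,\theta}\|x_n-y_n\|_p^\theta\leq C_{p,\theta}\|z\|_p^\theta.$$
Since $h_n\to 1$ strongly, $x_n\to x$ and $y_n\to y$ in the measure topology; by continuity of $f$ on $L_0^{sa}$ (Lemma 3.2 in \cite{Ray}), $f(x_n)-f(y_n)\to f(x)-f(y)$ in $L_0$. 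The Fatou lemma (Lemma 3.4 in \cite{FK}) then gives
$$\|f(x)-f(y)\|_{p/\theta}\leq \liminf_n\|f(x_n)-f(y_n)\|_{p/\theta}\leq C_{p,\theta}\|z\|_p^\theta,$$
which in particular shows $f(x)-f(y)\in L_{p/\theta}(\M,\tau)$.

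The one delicate point is the construction of the projections $h_n$: they must simultaneously lie below the spectral truncations of $x$ and $y$ (so that $x_n,y_n$ are bounded), be finite (so that $x_n,y_n\in L_p$), and converge strongly to $1$ (so that the limit closes). For general non-$\sigma$-finite $\M$ one either works with nets, or reduces to the $\sigma$-finite sub-algebra of $\M$ generated by $x$, $y$ and their spectral projections; everything in the statement only involves $x$, $y$ and $z=x-y$, so this reduction is harmless and allows a sequential diagonal extraction. No further new ideas beyond Theorem~\ref{ando2} and basic measure-topology facts are needed.
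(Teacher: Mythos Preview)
There is a genuine gap at the step ``Since $h_n\to 1$ strongly, $x_n\to x$ and $y_n\to y$ in the measure topology.'' Strong convergence of an increasing net of finite projections to $1$ does \emph{not} in general imply convergence in measure, and hence does not imply $h_n x h_n\to x$ in $L_0$. A concrete counterexample: take $\M=L^\infty(\bR)$ with Lebesgue measure, $x=1\in L_0$, and $h_n=1_{[-n,n]}$. These are finite projections with $h_n\to 1$ strongly, yet $m(\{|h_n x h_n-x|>1/2\})=m(\bR\setminus[-n,n])=\infty$ for every $n$, so $h_n x h_n\not\to x$ in measure. The obstruction is intrinsic: if $\M$ is not finite and $h_n$ is a finite projection, then $\tau(1-h_n)=\infty$, so $h_n$ can never converge to $1$ in measure, and there is no reason for the compression by $h_n$ of a generic $x\in L_0$ to converge in $L_0$.

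The paper circumvents this by splitting the approximation into two stages. First it reduces to $x,y\in\M^{sa}$ using the spectral cut-offs $q_n=1_{|x|\leq n}\wedge 1_{|y|\leq n}$: here $\tau(1-q_n)\to 0$ (by $\tau$-measurability), so $q_n\to 1$ in measure and $q_n x q_n\to x$ in $L_0$, but $q_n x q_n$ lands only in $\M$, not in $L_p$. Second, for $x,y\in\M^{sa}$ it compresses by finite projections $q_n\uparrow 1$; now $q_n x q_n\to x$ only strongly, so one cannot invoke $L_0$-continuity of $f$. Instead one uses that strong convergence of bounded sequences is preserved by continuous functional calculus, and then sandwiches by an arbitrary finite projection $\gamma$ to turn strong convergence into $L_2$-convergence (hence $L_0$-convergence), applies Fatou to bound $\|\gamma(f(x)-f(y))\gamma\|_{p/\theta}$, and finally takes the supremum over all finite $\gamma$. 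Your one-shot argument collapses these two stages and loses exactly the mechanism that makes each one work.
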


\begin{proof}
This is a matter of approximations.

 We start by giving arguments to get the result when
 $x,y\in \M^{sa}$ and $x-y\in L_p(\M)$.  First note that if a
 bounded sequence $(x_n)_n\in \M^{sa}$ goes to $x$ for the strong-topology
then $(f(x_n))_n$ also goes to $f(x)$ for the strong-topology (Lemma 4.6 
in \cite{Tak}). Take any finite projection $\gamma\in \M$ and $(q_n)_n$ a sequence of finite projections that goes to 1 strongly, it follows that 
$(\gamma f(q_nxq_n)\gamma)_n$ goes to  $\gamma f(x)\gamma$ in $L_2$.
since $L_2\subset L_0$ is continuous, we get that in the topology of $L_0$, $\lim_n \gamma f(q_nxq_n)\gamma
=\gamma f(x)\gamma$ and similarly for $y$.

 As $q_nxq_n$ and $q_nyq_n$ are in $L_p$, we get 
$$\| \gamma (f(q_nxq_n)-f(q_nyq_n))\gamma\|_{p/\theta}\leq C_{p,\theta} \| q_n(x-y)q_n\|_p^\theta\leq C_{p,\theta} \|x-y\|_p^\theta.$$
Using the Fatou lemma, we obtain
$$\| \gamma (f(x)-f(y))\gamma\|_{p/\theta}\leq C_{p,\theta} \|x-y\|_p^\theta.$$
It is a simple exercise to check that if $z\in L_0^{sa}$ is so that 
$\sup \|\gamma z \gamma \|_{p/\theta} \leq C$ where the $\sup$ runs over all finite projections, then $z\in L_{p/\theta}$ with norm less than $C$. This allows to conclude.

Next take any $x,y\in L_0$ such that $x-y\in L_p$. Let $q_n=1_{|x|\leq
  n}\wedge 1_{|y|\leq n}$, then $q_n$ is an increasing sequence of projections
to 1 with $\tau(1-q_n)\to 0$ as $x,y\in L_0$. Moreover $q_nxq_n\in \M$ goes to $x$ in 
$L_0$ (similarly for $y$) (see \cite{terp} page 20). By the continuity of the functional calculus in 
$L_0$,  $(f(q_nxq_n))$ and $(f(q_nyq_n))$ go to $f(x)$ and $f(y)$.
Once again with the help of the Fatou lemma 
and the previous case  
$$\|f(x)-f(y)\|_{p/\theta}\leq \liminf \|f(q_nxq_n)-f(q_nyq_n)\|_{p/\theta}
\leq C_{p,\theta} \liminf \|q_n(x-y)q_n\|_p^\theta \leq C_{p,\theta}\|x-y\|_p^\theta.$$
\end{proof}

\section{Ando's inequality in type III algebras}

Before going to type $III$ algebras, we need to extend Ando's
inequality to weak-$L_p$ spaces. We will use the $K$-interpolation
method see \cite{BL}.

\medskip

As usual, given two compatible quasi-Banach
 spaces $X_0$ and $X_1$, we let for $t>0$ and $x\in X_0+X_1$
$$K_t(x,X_0,X_1)=\inf\{ \|x_0\|_{X_0}+ t\|x_1\|_{X_1}\;;\; x=x_0+x_1\}.$$

For $0<\eta<1$ and $0<q\leq\infty$ set 
$$\|x\|_{\eta,q}= \| t^{-\eta} K_t(x,X_0,X_1)\|_{L_q(\bR^+,dt/t)}=\left(\int_0^\infty
(t^{-\eta} K_t(x,X_0,X_1))^q \frac {dt}t\right)^{1/q},$$
with the obvious modification when $q=\infty$.

The interpolated space $(X_0,X_1)_{\eta,q}$ is $\{x\in X_0+X_1 \; |\; \|x\|_{\eta,q}<\infty\}$ with (quasi)-norm $\|.\|_{\eta,q}$.

 If $(\M,\tau)$ is a semi-finite von Neumann algebra, and $x\in L_0(\M,\tau)$ we denote as usual its decreasing rearrangement by $\mu_t(x)$ (see \cite{FK}).

 The noncommutative Lorentz spaces $L_{p,q}(\M,\tau)$ for $0<p<\infty$
 and $0<q\leq \infty$ are defined as in the commutative case.
 The space $L_{p,q}(\M,\tau)$ consists of all measurable 
operators $x\in L_0(\M,\tau)$ so that $\|x\|_{L_{p,q}}=\|t^{1/p}
 \mu_t(f)\|_{L_q(\bR^+,dt/t)}<\infty$. With the (quasi)-norm
 $\|.\|_{L_{p,q}}$, it becomes a (quasi)-Banach space.

The results about real interpolation of commutative $L_p$-spaces
(\cite{BL} Theorem 5.3.1) remain available for semi-finite von Neumann
algebras. Indeed for any $0<p_0<p_1\leq \infty$, for all $t>0$, $x\in
L_{p_0}(\M,\tau)+L_{p_1}(\M,\tau)$, the quantities
$K_t(x,L_{p_0}(\M,\tau),L_{p_1}(\M,\tau))$ and
$K_t(\mu(x),L_{p_0}(\bR^+), L_{p_1}(\bR^+))$ are
        equivalent with constants depending only on $p_0$ and $p_1$,
        see \cite{Xu} for details. As a consequence, we get that for
        $p_0\neq p_1\leq \infty$ and $0<q\leq \infty$, $0<\eta<1$ with
        $\frac 1 p=\frac {1-\eta}{p_0}+\frac \eta {p_1}$,
        $(L_{p_0}(\M,\tau),L_{p_1}(\M,\tau))_{\eta,q}=L_{p,q}(\M,\tau)$
        with equivalent norms (depending only on the parameters but
        not on $(\M,\tau)$).

We drop the reference to $(\M,\tau)$ to lighten notation.

 When $x=x^*\in L_{p_0}+L_{p_1}$, we can also consider
$$K^{sa}_t(x,L_{p_0},L_{p_1})=\inf \{ \|x_0\|_{p_0}+ t\|x_1\|_{p_1}\;;\; x=x_0+x_1 \textrm{ with } x_i=x_i^*\}.$$

 \begin{lemma}
Let $0<p_0<p_1\leq \infty$, $t>0$ and $x\in (L_{p_0}+L_{p_1})^{sa}$, then 
$$K_t^{sa}(x,L_{p_0},L_{p_1})\geq K_t(x,L_{p_0},L_{p_1})\geq K_t^{sa}(x,L_{p_0},L_{p_1})/2^{\max\{1/p_0,1\}-1}.$$
\end{lemma}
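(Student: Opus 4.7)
The plan is to observe that the first inequality $K_t^{sa}\geq K_t$ is immediate, since selfadjoint decompositions form a subclass of all admissible decompositions in the defining infimum. So the real work is the reverse estimate: given an arbitrary decomposition $x = x_0+x_1$ with $x_i\in L_{p_i}$, symmetrize it into a selfadjoint one and control the loss in quasi-norm.

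Concretely, since $x=x^*$, I would set $x_i' = \tfrac{1}{2}(x_i + x_i^*)$ for $i=0,1$. Then $x_0'+x_1' = \tfrac{1}{2}(x+x^*) = x$, both terms are selfadjoint, and each $x_i'$ lies in $L_{p_i}$ because $L_{p_i}$ is closed under the involution (with $\|x_i^*\|_{p_i}=\|x_i\|_{p_i}$). Now I need to bound $\|x_i'\|_{p_i}$ in terms of $\|x_i\|_{p_i}$. For $p_i\geq 1$ the ordinary triangle inequality gives $\|x_i'\|_{p_i}\leq \tfrac{1}{2}(\|x_i\|_{p_i}+\|x_i^*\|_{p_i}) = \|x_i\|_{p_i}$, i.e.\ no loss. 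For $p_i<1$ the $p_i$-quasi-triangle inequality yields
\[
\|x_i'\|_{p_i}^{p_i} \leq \|\tfrac{1}{2}x_i\|_{p_i}^{p_i} + \|\tfrac{1}{2}x_i^*\|_{p_i}^{p_i} = 2^{1-p_i}\|x_i\|_{p_i}^{p_i},
\]
so $\|x_i'\|_{p_i}\leq 2^{1/p_i - 1}\|x_i\|_{p_i}$. In both ranges this is $\|x_i'\|_{p_i}\leq 2^{\max\{1/p_i,1\}-1}\|x_i\|_{p_i}$.

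Since $p_0<p_1$, we have $\max\{1/p_1,1\}\leq \max\{1/p_0,1\}$, so the same constant $C:=2^{\max\{1/p_0,1\}-1}$ works for both indices. Therefore
\[
\|x_0'\|_{p_0} + t\|x_1'\|_{p_1} \leq C\bigl(\|x_0\|_{p_0} + t\|x_1\|_{p_1}\bigr).
\]
Taking the infimum over all (not necessarily selfadjoint) decompositions on the right and noting that $(x_0',x_1')$ is admissible for $K_t^{sa}$, we obtain $K_t^{sa}(x,L_{p_0},L_{p_1}) \leq C\, K_t(x,L_{p_0},L_{p_1})$, which is the second inequality.

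There is no real obstacle here: the only subtlety is keeping track of the quasi-triangle constant in the $p<1$ regime and checking that $\max\{1/p_0,1\}$ dominates $\max\{1/p_1,1\}$, so that a single constant suffices for the two summands simultaneously.
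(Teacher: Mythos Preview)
Your proof is correct and follows exactly the same route as the paper: symmetrize an arbitrary decomposition via $x_i'=\tfrac12(x_i+x_i^*)$ and control the loss with the (quasi-)triangle inequality, using that $\max\{1/p_0,1\}\geq\max\{1/p_1,1\}$ so a single constant works for both summands. Your write-up merely spells out in more detail what the paper compresses into two lines.
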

\begin{proof}
This is a standard fact. Let $x=a_0+a_1$ with $a_i\in L_{p_i}$ such that 
$\|a_0\|_{p_0}+t\|a_1\|_{p_1}\leq K_t(x,L_{p_0},L_{p_1})+\epsilon$. Then
$x=b_0+b_1$ with $b_i=(a_i+a_i^*)/2$. We have $\|b_i\|_{p_i}\le 2^{1/p_i-1}\|a_i\|_{p_i}$ if $p_i<1$ or $\|b_i\|_{p_i}\le \|a_i\|_{p_i}$ otherwise, hence we get
the result letting $\eps\to 0$.
\end{proof}

\begin{rk}{\rm
Similarly one can easily show that for $x\geq 0$, $K_t(x,L_{p_0},L_{p_1})$
is equivalent to $\inf \{ \|x_0\|_{p_0}+ t\|x_1\|_{p_1}\;;\; x=x_0+x_1 \textrm{ with } x_i\geq 0\}.$}
\end{rk}

\begin{lemma}\label{kfonc}Let $0<p_0<p_1\leq \infty$ and $\theta\in]0,1[$ and $x,y \in (L_{p_0}+L_{p_1})^{sa}$. 
Then for all $t>0$ and $f(s)=|s|^\theta$ or $f(s)={\rm sgn}(s)|s|^{\theta}$:
$$K_{t^\theta}(f(y)-f(x),L_{p_0/\theta},L_{p_1/\theta})\leq C_{p_0,p_1,\theta} K_t(y-x,L_{p_0},L_{p_1})^\theta.$$
\end{lemma}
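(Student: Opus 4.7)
The plan is to reduce Lemma \ref{kfonc} to a two-term application of Theorem \ref{ando2+} via the standard telescoping trick. Given a decomposition $y-x=a_0+a_1$ with $a_i\in L_{p_i}$, the idea is to introduce the intermediate selfadjoint element $z=x+a_0=y-a_1$ and write
\[
f(y)-f(x)=\bigl(f(z)-f(x)\bigr)+\bigl(f(y)-f(z)\bigr),
\]
taking $b_0:=f(z)-f(x)$ as the $L_{p_0/\theta}$-piece and $b_1:=f(y)-f(z)$ as the $L_{p_1/\theta}$-piece. So first I would restrict the infimum in $K_t(y-x,L_{p_0},L_{p_1})$ to \emph{selfadjoint} decompositions $a_0,a_1$: this is free up to a universal constant thanks to the lemma comparing $K_t$ and $K_t^{sa}$. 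For any such selfadjoint decomposition, $z=x+a_0\in(L_{p_0}+L_{p_1})^{sa}$ is a well-defined $\tau$-measurable selfadjoint operator, and $z-x=a_0\in L_{p_0}$, $y-z=a_1\in L_{p_1}$, which is exactly the hypothesis needed to feed into Theorem \ref{ando2+}.

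Applying Theorem \ref{ando2+} to the pair $(z,x)$ and to the pair $(y,z)$ separately yields
\[
\|b_0\|_{p_0/\theta}\le C_{p_0,\theta}\|a_0\|_{p_0}^{\theta},\qquad
\|b_1\|_{p_1/\theta}\le C_{p_1,\theta}\|a_1\|_{p_1}^{\theta}
\]
(with $p_1/\theta$ interpreted as $\infty$ when $p_1=\infty$, which is covered by the same theorem). Then, by the elementary subadditivity/concavity inequality $A^\theta+(tB)^\theta\le 2\,(A+tB)^{\theta}$, valid for $A,B\ge 0$ and $0<\theta<1$, one obtains
\[
\|b_0\|_{p_0/\theta}+t^{\theta}\|b_1\|_{p_1/\theta}\le 2\max(C_{p_0,\theta},C_{p_1,\theta})\,\bigl(\|a_0\|_{p_0}+t\|a_1\|_{p_1}\bigr)^{\theta}.
\]
Taking the infimum over all selfadjoint decompositions of $y-x$ bounds the left-hand side by a constant multiple of $K_t^{sa}(y-x,L_{p_0},L_{p_1})^{\theta}$, hence of $K_t(y-x,L_{p_0},L_{p_1})^{\theta}$, and the left-hand side is an admissible competitor for $K_{t^{\theta}}(f(y)-f(x),L_{p_0/\theta},L_{p_1/\theta})$. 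This gives the claimed inequality.

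There is really no substantial obstacle here: the argument is structural, and both technical points — the selfadjointness of the two sub-decomposition pieces and the extension of Ando's inequality beyond $\M$ to $L_0$ — have already been taken care of, respectively, by the $K_t$ vs.\ $K_t^{sa}$ equivalence and by Theorem \ref{ando2+}. The only thing to verify carefully is that the constant $C_{p_0,p_1,\theta}$ comes out depending only on the stated parameters, which is immediate from the bookkeeping above.
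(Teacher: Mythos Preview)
Your proof is correct and follows essentially the same approach as the paper: reduce to selfadjoint decompositions via the $K_t$ vs.\ $K_t^{sa}$ comparison, introduce an intermediate selfadjoint element to telescope $f(y)-f(x)$ into two pieces, and apply Theorem~\ref{ando2+} to each piece. The only cosmetic difference is that the paper picks the intermediate point $x+\delta_1$ rather than your $x+a_0$, which simply swaps the roles of the two summands.
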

\begin{proof}
Thanks to the previous Lemma, it suffices to do it with $K_t^{sa}$ instead of $K_t$.

Choose selfadjoint operators
 $\delta_0\in L_{p_0}$ and $\delta_1\in L_{p_1}$ so that 
$y-x=\delta_0+\delta_1$ and  $\|\delta_0\|_{p_0}+t\|\delta_1\|_{p_1}\leq 
2K^{sa}_{t}(x-y,L_{p_0},L_{p_1})$. 

Set $a_0=f(y)-f(x+\delta_1)=f(y)-f(y-\delta_0)$ and
$a_1=f(x+\delta_1)-f(x)$, then $f(y)-f(x)=a_0+a_1$. By Theorem
\ref{ando2+} for $p_i$, we obtain $\|a_i\|_{p_i/\theta}\leq
C_{p_i,\theta} \|\delta_i\|_{p_i}^\theta$. Since, $\|a_0\|_{p_0/\theta} + t^\theta
\|a_1\|_{p_1/\theta}\leq
C_{p_0,p_1,\theta}(\|\delta_0\|_1+t\|\delta_1\|_{p_1})^\theta$,
we have found a suitable
decomposition to conclude.
\end{proof}

\begin{prop}\label{weak}
For all $0<p<\infty$, $0<q\leq \infty$ and $0<\theta<1$, there exists $C_{p,q,\theta}>0$ such that for $x,y\in  L_{p}^{sa}$, with $f(s)=|s|^\theta$ or $f(s)={\rm sgn}(s)|s|^{\theta}$:
$$\|f(y)-f(x)\|_{L_{p/\theta,q}}\leq C_{p,q,\theta} \|y-x\|_{L_{p,q\theta}}^\theta.$$
\end{prop}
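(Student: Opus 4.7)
The plan is to interpret both Lorentz norms as real-interpolation norms for a common parameter $\eta$, and then transport the $K$-functional inequality from Lemma \ref{kfonc} via a change of variables.

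First I would fix any $0<p_0<p<p_1<\infty$ and take $\eta\in(0,1)$ with $\frac{1}{p}=\frac{1-\eta}{p_0}+\frac{\eta}{p_1}$. Dividing by $\theta$ shows that the same $\eta$ satisfies $\frac{\theta}{p}=\frac{1-\eta}{p_0/\theta}+\frac{\eta}{p_1/\theta}$, so one $\eta$ parametrizes both the couple $(L_{p_0},L_{p_1})$ and the couple $(L_{p_0/\theta},L_{p_1/\theta})$. Using the identification $(L_{r_0},L_{r_1})_{\eta,r}=L_{r,r}$ recalled earlier in the section, both sides of the desired inequality become weighted $L_r(\bR^+,dt/t)$-norms of $K$-functionals, up to universal equivalence constants.

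Next I would change variables $s=t^\theta$ in the integral defining $\|f(y)-f(x)\|_{L_{p/\theta,q}}$. This sends the weight $s^{-\eta}$ to $t^{-\eta\theta}$ and the measure $ds/s$ to $\theta\,dt/t$; combined with the pointwise estimate
$$K_{t^\theta}\bigl(f(y)-f(x),L_{p_0/\theta},L_{p_1/\theta}\bigr)\leq C\,K_t(y-x,L_{p_0},L_{p_1})^\theta$$
from Lemma \ref{kfonc}, the integrand becomes at most a constant multiple of $(t^{-\eta}K_t(y-x,L_{p_0},L_{p_1}))^{q\theta}$. Integrating and recognising the right-hand side as $\|y-x\|_{L_{p,q\theta}}^{q\theta}$ up to interpolation constants, then taking $q$-th roots, yields the desired bound. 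The limiting case $q=\infty$ is handled identically with $\sup_s$ in place of $\int\,ds/s$; note that $q\theta=\infty$ on the right as well, so no separate indexing is needed.

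I do not anticipate a genuine obstacle: Lemma \ref{kfonc} already carries all the analytic content, while the above is a purely formal reparametrization of the interpolation norm. The only mildly delicate point is that the $K$-functional bound is a statement for selfadjoint pairs, but since $x,y\in L_p^{sa}$, all differences and the functions $f(x),f(y)$ are automatically selfadjoint, so the selfadjoint $K$-functional $K_t^{sa}$ (equivalent to $K_t$ by the preceding lemma) is the natural object on both sides.
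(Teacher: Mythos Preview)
Your proposal is correct and follows essentially the same argument as the paper: the paper simply makes the concrete choice $p_0=p/2$, $p_1=2p$, $\eta=2/3$, then expresses both Lorentz norms as real-interpolation norms with the same $\eta$, applies the pointwise $K$-functional estimate from Lemma~\ref{kfonc}, and performs the change of variable $u=t^{1/\theta}$ (your $s=t^\theta$). The only cosmetic differences are that you leave $p_0,p_1$ generic and you spell out the $q=\infty$ case; neither changes the substance.
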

\begin{proof}
Put $p_0=p/2$, $p_1=2p$ and $\eta=2/3$ so that $\frac 1 p= \frac
{1-\eta}{p_0}+\frac \eta {p_1}$. We have
\begin{eqnarray*}
\|f(y)-f(x)\|_{L_{p/\theta,q}} &\simeq_{p/\theta,q}& \| t^{-\eta} K_t(f(y)-f(x),L_{p_0/\theta},L_{p_1/\theta})\|_{L_q(\bR^+,dt/t)},\\
 \|y-x\|_{L_{p,q\theta}} &\simeq_{p,q\theta}&\| u^{-\eta} K_{u}(y-x,L_{p_0},L_{p_1})\|_{L_{q\theta}(\bR^+,du/u)}.\end{eqnarray*}
But by Lemma \ref{kfonc}
$$\| t^{-\eta}
K_t(f(y)-f(x),L_{p_0/\theta},L_{p_1/\theta})\|_{L_q(\bR^+,dt/t)}\leq
C_{p,\theta}\| t^{-\eta}
K_{t^{1/\theta}}(y-x,L_{p_0},L_{p_1})^\theta\|_{L_q(\bR^+,dt/t)}.$$ But
by a change of variable $u=t^{1/\theta}$: $$\| t^{-\eta}
K_{t^{1/\theta}}(y-x,L_{p_0},L_{p_1})^\theta\|_{L_q(\bR^+,dt/t)}=\theta^{1/q}\|
u^{-\eta} K_{u}(y-x,L_{p_0},L_{p_1})\|^\theta_{L_{q\theta}(\bR^+,du/u)}$$
and we get the estimate.\end{proof}

We can conclude with the proof of Theorem \ref{ando2} for type III algebras.
\begin{proof}
 Assume that $\M$ is given with a
n.s.f. weight $\phi$ with modular group $\sigma$. Let
$\R=\M\rtimes_{\hat\sigma} \bR$ be its core (with the dual action
$\hat\sigma$), this is a semi-finite von Neumann algebra with a trace $\tau$
such that $\tau\circ \hat\sigma_t=e^{-t}\tau$. Then by definition $L_{p}(\M,\phi)$ is
isometrically a  subspace of $L_{p,\infty}(\R, \tau)$, more precisely
$$L_p(\M,\phi)=\{ x\in L_0(\R,\tau)\; |\; \hat\sigma_t(x)=e^{-t/p}x,
\,\forall t\in \bR\}\subset L_{p,\infty}(\R, \tau),$$ 
with by definition $\|x\|_p=\|x\|_{L_{p,\infty}(\R)}$ (see \cite{terp} Definition 13 and Lemma 5 or Lemma B in \cite{Kos2}).

Thus Theorem
\ref{ando2} for type III algebras follows from Proposition \ref{weak} for the
weak-$L_p$ spaces noticing that $f(L_p(\M,\phi))\subset L_{p/\theta}(\M,\phi)$ as $\hat\sigma_t$ is a representation and $f$ is $\theta$-homogeneous.
\end{proof}

\section{Further comments}

By very classical arguments using the Cayley transform see \cite{R},
estimates for the functional calculus are equivalent to some for
commutators or anticommutators.

\begin{prop}
Let $0<p\leq\infty$ and $0<\theta<1$, then there exists a constant $C_{p,\theta}$ so that for $x \in L_{p}(\M)^{sa}$ and $b\in \M$ with $f(s)=|s|^\theta$ or $f(s)={\rm sgn}(s)|s|^{\theta}$:
$$\big\|[f(x),b]\big\|_{p/\theta} \leq C_{p,\theta} \big\|[x,b]\big\|_p^\theta\|b\|^{1-\theta}.$$
If $x,y \in L_{p}(\M)^{+}$ and $b\in \M$, then 
$$\big\| bx^\theta\pm y^\theta b\big\|_{p/\theta}\leq C_{p,\theta} \big\|bx\pm yb\big\|_p^\theta \|b\|^{1-\theta}.$$
\end{prop}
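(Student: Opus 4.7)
The plan is to establish the commutator estimate first and then derive the anticommutator inequalities from it through a standard $2\times 2$ matrix dilation.

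\smallskip
\noindent\textbf{Reduction of the anticommutator estimates.} Granted the commutator estimate, apply it in $M_2(\M)$ with
$$X=\begin{pmatrix}x&0\\ 0&\pm y\end{pmatrix}\in L_p(M_2(\M))^{sa},\qquad B=\begin{pmatrix}0&0\\ b&0\end{pmatrix}\in M_2(\M),$$
where $\|B\|=\|b\|$ and $X$ is selfadjoint since $x,y\ge 0$. The only nonzero entries of $[X,B]$ and $[f(X),B]$ are $\pm yb-bx$ and $\pm f(y)b-bf(x)$, both at position $(2,1)$. Taking $f(s)=s^\theta$ with $X=\diag(x,y)$ produces the ``$-$'' inequality, and $f(s)={\rm sgn}(s)|s|^\theta$ with $X=\diag(x,-y)$ produces the ``$+$'' one.

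\smallskip
\noindent\textbf{Proof of the commutator estimate.} By homogeneity ($b\mapsto b/\|b\|$) it suffices to show $\|[f(x),b]\|_{p/\theta}\le C_{p,\theta}\|[x,b]\|_p^\theta$ under $\|b\|=1$; the $\|b\|^{1-\theta}$ factor is recovered by undoing the rescaling. The strategy is to dilate $b$ to a unitary via the Cayley transform in $M_2(\M)$ so that Ando's inequality (Theorem \ref{ando2+}) applies. Set
$$X=x\otimes 1_2,\qquad Y=\begin{pmatrix}0&b\\ b^*&0\end{pmatrix},\qquad V=1-2i(Y+i)^{-1}=(Y-i)(Y+i)^{-1}.$$
Since $Y$ is selfadjoint bounded with $\|Y\|=1$, $V$ is unitary in $M_2(\M)$ and $\|Y+i\|\le 2$, $\|(Y+i)^{-1}\|\le 1$. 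A direct matrix computation gives
$$[X,Y]=\begin{pmatrix}0&[x,b]\\ -[x,b]^*&0\end{pmatrix},\qquad \|[X,Y]\|_p=2^{1/p}\|[x,b]\|_p,$$
and the analogous formula with $x$ replaced by $f(x)$ yields $\|[f(X),Y]\|_{p/\theta}=2^{\theta/p}\|[f(x),b]\|_{p/\theta}$. The resolvent identity $[A,B^{-1}]=-B^{-1}[A,B]B^{-1}$ then gives
$$[X,V]=2i(Y+i)^{-1}[X,Y](Y+i)^{-1},\qquad [f(X),V]=2i(Y+i)^{-1}[f(X),Y](Y+i)^{-1},$$
so $\|[X,V]\|_p\le C_p\|[x,b]\|_p$, and inverting the second identity through the bounded $Y+i$ shows $\|[f(x),b]\|_{p/\theta}\le C_{p,\theta}\|[f(X),V]\|_{p/\theta}$.

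It remains to apply Theorem \ref{ando2+} in $M_2(\M)$ to $X$ and the selfadjoint $V^*XV$. Using $f(V^*XV)=V^*f(X)V$, $V^*XV-X=V^*[X,V]$, and isometry of multiplication by the unitary $V$ on $L_q$ for every $q$,
$$\|[f(X),V]\|_{p/\theta}=\|f(V^*XV)-f(X)\|_{p/\theta}\le C_{p,\theta}\|V^*XV-X\|_p^\theta=C_{p,\theta}\|[X,V]\|_p^\theta.$$
Chaining all inequalities gives $\|[f(x),b]\|_{p/\theta}\le C'_{p,\theta}\|[x,b]\|_p^\theta$, as required.

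\smallskip
\noindent\textbf{Main technical point.} The only subtlety is justifying the resolvent manipulations for $X\in L_p$ (possibly unbounded) alongside the bounded operators $V$ and $(Y+i)^{-1}$. Since $L_p$ is a bimodule over $\M$, all products $XV$, $VX$, $X(Y+i)^{-1}$, $(Y+i)^{-1}X$ lie in $L_p(M_2(\M))$, and the needed algebraic identities remain valid as formal manipulations in $L_p$.
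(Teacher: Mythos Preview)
Your proof is correct and follows precisely the approach the paper indicates: the Cayley transform trick from \cite{R} to pass from the functional-calculus estimate (Theorem~\ref{ando2}) to commutators, together with the $2\times 2$ matrix dilations to reduce the anticommutator statements to the commutator one. One cosmetic remark: since $X$ and $V^*XV$ both lie in $L_p(M_2(\M))$, you may invoke Theorem~\ref{ando2} (valid for all von Neumann algebras) rather than the semi-finite Theorem~\ref{ando2+}; this avoids any ambiguity about $L_0$ in the type~III setting.
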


Let $M_{p,q}$ denote the Mazur map for $p<q$ given by
$M_{p,q}(f)=f|f|^{(p-q)/q}$. The $2\times 2$-tricks from \cite{R} also give

\begin{prop}
Let $0<p\leq \infty$ and $0<\theta<1$, then there exists a constant $C_{p,\theta}$ so that for $x,y \in L_{p}(\M)$:
$$\big\|M_{p,p/\theta}(x)-M_{p,p/\theta}(y)]\big\|_{p/\theta} \leq C_{p,\theta} \|x-y\|_p^\theta.$$
\end{prop}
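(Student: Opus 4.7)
The plan is to reduce the statement for a not-necessarily-selfadjoint $x\in L_p(\M)$ to Theorem \ref{ando2} applied to the function $f(s)=\mathrm{sgn}(s)|s|^\theta$ via the classical $2\times 2$ selfadjoint dilation. Given $x\in L_p(\M)$ with polar decomposition $x=u_x|x|$, observe that
$$M_{p,p/\theta}(x)=x|x|^{\theta-1}=u_x|x|^\theta,$$
so the map we wish to control is essentially $x\mapsto u_x|x|^\theta$.

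Inside $M_2\tens\M$, consider the selfadjoint operator
$$\tilde{x}=\begin{pmatrix}0 & x\\ x^* & 0\end{pmatrix}.$$
Its polar decomposition is
$$\tilde{x}=\begin{pmatrix}0 & u_x\\ u_x^* & 0\end{pmatrix}\begin{pmatrix}|x^*| & 0\\ 0 & |x|\end{pmatrix},$$
so a direct computation gives
$$\mathrm{sgn}(\tilde{x})|\tilde{x}|^\theta=\begin{pmatrix}0 & u_x|x|^\theta\\ u_x^*|x^*|^\theta & 0\end{pmatrix}=\begin{pmatrix}0 & M_{p,p/\theta}(x)\\ M_{p,p/\theta}(x^*) & 0\end{pmatrix}.$$
Moreover $\|\tilde{x}\|_{L_p(M_2\tens\M)}$ and $\|x\|_{L_p(\M)}$ are equivalent with constants depending only on $p$ (just a $2\times 2$ block norm comparison), and the same holds at the $p/\theta$ level.

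Given $x,y\in L_p(\M)$, form $\tilde{x},\tilde{y}\in L_p(M_2\tens\M)^{sa}$ as above and apply Theorem \ref{ando2} to them with $f(s)=\mathrm{sgn}(s)|s|^\theta$ in the algebra $M_2\tens\M$ (which is type III if $\M$ is, but Theorem \ref{ando2} has been proved in full generality at the end of Section 4). This yields
$$\bigl\|\mathrm{sgn}(\tilde{x})|\tilde{x}|^\theta-\mathrm{sgn}(\tilde{y})|\tilde{y}|^\theta\bigr\|_{p/\theta}\leq C_{p,\theta}\,\|\tilde{x}-\tilde{y}\|_p^\theta.$$
Reading off the $(1,2)$-block on the left and using $\|\tilde{x}-\tilde{y}\|_p\leq C_p\|x-y\|_p$, together with the fact that the $L_{p/\theta}$-norm of a block is bounded by the $L_{p/\theta}$-norm of the whole matrix, produces the desired inequality with a possibly adjusted constant.

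There is no serious obstacle here: the argument is completely mechanical once one has Theorem \ref{ando2} in hand, because the selfadjoint dilation converts the Mazur map into the homogeneous functional calculus $s\mapsto\mathrm{sgn}(s)|s|^\theta$. The only point requiring mild care is to ensure that the $2\times 2$ trick can be applied in the type III setting, but since Theorem \ref{ando2} has already been extended to arbitrary von Neumann algebras, passing to $M_2\tens\M$ is harmless.
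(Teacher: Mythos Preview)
Your argument is correct and is precisely the $2\times 2$ trick to which the paper refers (via \cite{R}): dilate $x$ to the selfadjoint $\tilde{x}=\begin{pmatrix}0&x\\x^*&0\end{pmatrix}$, observe that $\mathrm{sgn}(\tilde{x})|\tilde{x}|^\theta$ has $M_{p,p/\theta}(x)$ as its $(1,2)$ block, and apply Theorem \ref{ando2} in $M_2\otimes\M$. The block norm comparisons you invoke are exact up to factors of $2^{1/p}$ and $2^{\theta/p}$, so nothing further is needed.
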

This improves the estimates for $M_{p,q}$ when $p<q$ of Theorem 4.1 in \cite{PR} and gives the expected H\"older continuity.

\begin{rk}
{\rm The above two propositions are valid for all von Neumann algebras. For semi-finite ones, similar estimates are true for the Lorentz norms as in Proposition \ref{weak}.}
\end{rk}

\bigskip

Contrary to the case $p\geq 1$, at least for $p\leq 1/2$,
  there is no constant $C_p$ so that for all $0<\theta<1$, 
$\|\Big(\frac {x^\theta- y^\theta}{x-y}\Big)_{x\geq 1,y\geq 1}\|_{pcb}\leq C_p.$

Indeed if this is so, then taking $\theta=1-\eps$, $x=e^{t/\eps}$,
$y=e^{s/\eps}$ by Lemma \ref{approx}, letting $\eps\to 0$, we would
get that $\|\Big(e^{-\max\{t,s\}}\Big)_{t,s\geq 0}\|_{pcb}\leq
C_p$. Using that $2\max\{x,y\}= |x-y|+x+y$, we would deduce that
$\|\Big(e^{-|t-s|}\Big)_{0\leq t,s\leq 1}\|_{pcb}\leq
C_p$. 

By easy arguments going from a discrete situation to a continuous
one, one can deduce that the same matrix has to be a Schur multiplier on 
$S^p(L_2[0,1])$. The constant kernel 1 on $[0,1]^2$  is in 
 $S^p(L_2[0,1])$ with norm one, thus we would get that 
$\|\Big(e^{-|x-y|}\Big)_{0\leq x,y\leq 1}\|_{S^p(L_2[0,1])}\leq C_p.$

The operator $T$ on $L_2[0,1]$ with kernel $K(x,y)=e^{-|x-y|}$ is
obviously positive and Hilbert-Schmidt with HS norm less than 1.  One
easily checks that the eigenvectors $f_\lambda$ associated to
$\lambda$ must satisfy $\lambda f''=\lambda f-2f$. Letting
$\lambda=\frac 2{1+\alpha^2}$ with $\alpha> 0$, the only possibilities
are $f_\lambda(x)=ae^{{\rm i} \alpha x}+be^{-{\rm i} \alpha x}$. But
$T(e^{{\rm i} \alpha x})= \frac 2 {1+\alpha^2} e^{{\rm i} \alpha x} -
\frac{e^{-x}}{1+ {\rm i} \alpha}-e^x \frac{e^{-1+{\rm i} \alpha}}{1-
  {\rm i} \alpha}$.  Thus, $\lambda$ is a eigenvalue iff $e^{2{\rm i}
  \alpha}=\Big(\frac{ 1-{\rm i} \alpha}{1+{\rm i} \alpha}\Big)^2$. Set
$\tan \theta=\alpha$ with $\theta\in]0,\pi/2[$, so that $e^{2{\rm i}
    \alpha}=e^{-4{\rm i \theta}}$. The equation $\tan
  t=-2t+k\pi$ admits a unique solution $\theta_k$ in $]0,\pi/2[$ for
  $k>0$ so that $\tan \theta_k\approx k\pi$. We deduce that the set of eigenvalues of $T$ is $(\frac
  2{1+\tan (\theta_k)^2})_{k\ge 1}$ with associated eigenvector
  $f_\lambda(x)= e^{{\rm i} \alpha x}-\frac{1+{\rm i} \alpha}{1-{\rm i} \alpha}
e^{-{\rm i} \alpha x}$. Thus $T\notin S^p (L_2[0,1])$ when $p\leq 1/2$.

As  $f_\lambda/\|f_\lambda\|_2$ is uniformly bounded in
$\C[0,1]$, we can also deduce using Lemma \ref{schur1} that  for $p>1/2$
$$\|\Big(e^{-\max\{t,s\}}\Big)_{0\leq t,s\leq 1}\|_{pcb}<\infty \quad\textrm{and}\quad\|\Big(e^{-\max\{t,s\}}\Big)_{t,s\geq 0}\|_{pcb}<\infty.$$

About Corollary 2.11, it is easier to see that the norm cannot be
independent of $0<\theta<1$ for $p\leq 1$. Indeed the multiplier
$\Big(\frac {x-y}{x+y}\Big)_{x\geq 1,y\geq 1}$, or equivalently
$\Big(\frac {x-y}{x+y}\Big)_{x\geq 0,y\geq 0}$ using homogeneity, is
not bounded for $p=1$ (hence also for $p<1$) as shown in \cite{Dav}.

\bibliographystyle{plain}

\end{document}